\documentclass[11pt,a4paper,reqno]{amsart}

\usepackage{amsmath,amssymb,amsfonts,amsthm,mathtools}
\usepackage[margin=1.15in]{geometry}
\usepackage{enumitem}
\usepackage[colorlinks=true,linkcolor=blue,citecolor=blue,urlcolor=blue]{hyperref}
\usepackage{microtype}

\numberwithin{equation}{section}

\newtheorem{theorem}{Theorem}[section]
\newtheorem{proposition}[theorem]{Proposition}
\newtheorem{lemma}[theorem]{Lemma}
\newtheorem{corollary}[theorem]{Corollary}
\newtheorem{assumption}[theorem]{Assumption}
\theoremstyle{definition}
\newtheorem{definition}[theorem]{Definition}
\newtheorem{example}[theorem]{Example}
\newtheorem{remark}[theorem]{Remark}

\newcommand{\Td}{\mathbb T^d}
\newcommand{\Tone}{\mathbb T}
\newcommand{\R}{\mathbb R}
\newcommand{\E}{\mathcal E}
\newcommand{\M}{\mathcal M}
\newcommand{\Ovac}{\mathcal O_m}
\newcommand{\Res}{\mathcal R_m}
\newcommand{\Pos}{\mathcal P_m}
\newcommand{\argmin}{\operatorname*{argmin}}

\newcommand{\dx}{\,dx}
\newcommand{\ResZero}{\mathcal R_m^{0}}
\newcommand{\ResGap}{\mathcal R_m^{\rm gap}}

\title[Remarks on selection problems in first-order MFGs]{Remarks on selection problems for first-order discounted Mean Field Games}
\author{Kengo Terai}
\address{Accenture Japan Ltd, Akasaka Intercity, 1-11-44 Akasaka, Minato-ku, Tokyo 107-8672}
\email{kengo.terai@accenture.com}

\subjclass[2020]{35Q89, 35F21, 35B40, 49L25}
\keywords{Mean field games, vanishing discount problem, selection problem, viscosity solutions.}

\begin{document}

\begin{abstract}

We study selection problems for the vanishing discount limit
of first-order stationary mean field games with local coupling.
The associated ergodic problem may admit several value functions
for the same density and ergodic constant.
We decompose the state space into the positive-density region, the essential
zero-density interior, and a residual set, and show that possible non-uniqueness of selected value functions can occur only
on the gap part of the residual set.
If the gradients of selected value functions coincide on this gap
residual set, then the selected value function is unique up to additive
constants; under compactness and stability assumptions, this yields
convergence of the whole normalized discounted family.
We show that the gap residual set is null for one-dimensional problems and
for a specific Hamiltonian in the multidimensional setting, and hence obtain convergence results in these cases.
\end{abstract}

\maketitle

\section{Introduction}

Mean field games were introduced by Lasry and Lions
\cite{LasryLions2007} and, independently, by Huang, Caines and Malham\'e
\cite{HuangCainesMalhame2006}, as a framework for differential games with a
large number of indistinguishable agents.  In the continuum limit, the
individual optimization problem and the evolution of the population
distribution are coupled through a Hamilton--Jacobi--Bellman equation and a
transport, Fokker--Planck, or continuity equation.  

In this paper, we study
stationary first-order mean field game systems with a local coupling
\begin{equation}\label{eq:discounted}
\begin{cases}
\varepsilon u^\varepsilon+H(x,Du^\varepsilon)=f(x,m^\varepsilon)
        &\qquad\text{in }\Td, \\[1mm]
\varepsilon m^\varepsilon
-\operatorname{div}\bigl(m^\varepsilon D_pH(x,Du^\varepsilon)\bigr)
=\varepsilon
        &\qquad\text{in }\Td, \\[1mm]
m^\varepsilon\ge0,\qquad
\displaystyle\int_{\Td}m^\varepsilon\dx=1.
\end{cases}
\end{equation}

Here $\varepsilon >0$, and $\Td$ denotes the $d$-dimensional flat torus, identified with $[0,1]^d$. The function $H: \Td \times \R^d \to \R$ is the Hamiltonian, and $f: \Td \times [0,\infty) \to \R$ is the local coupling. The unknowns are $u^\varepsilon : \Td \to \R$ and $m^\varepsilon : \Td \to [0,\infty)$.

The corresponding ergodic problem is
\begin{equation}\label{eq:ergodic-intro}
\begin{cases}
H(x,Du)=f(x,m)+\lambda &\qquad\text{in }\Td, \\[1mm]
-\operatorname{div}\bigl(mD_pH(x,Du)\bigr)=0 &\qquad\text{in }\Td, \\[1mm]
m\ge0,\qquad \displaystyle\int_{\Td}m\dx=1,
\end{cases}
\end{equation}
where \(u:\Td\to\R\) is the value function,
\(m:\Td\to[0,\infty)\) is the density of agents, and
\(\lambda\in\R\) is the ergodic constant.

Mathematical analysis of first-order mean field games with local coupling
differs substantially from that of uniformly elliptic second-order systems.
In the first-order case, the continuity equation contains no diffusive term,
and hence no elliptic regularization is available for the population density.
Consequently, \(m\) may have low regularity and may vanish on sets of positive
measure.  This lack of regularization is particularly relevant for local
couplings, since the Hamilton--Jacobi equation contains the pointwise term
\(f(x,m)\).  These considerations naturally lead to the notion of weak solutions in general.
Weak solution frameworks for first-order mean field games with local coupling
have been developed through dual variational formulations and monotonicity
methods; see
\cite{Cardaliaguet2015,CardaliaguetGraber2015,
Graber2014LinearlyBounded,FerreiraGomes2018}.
Regularity questions have also been studied in several settings; see, for
instance,
\cite{CardaliaguetPorrettaTonon2015,
GraberMeszaros2018,Munoz2022,AlharbiGomesDiFazioUcer2025}.
We also refer to \cite{MimikosStamatopoulosMunoz2024} for one-dimensional
regularity and long-time behavior, and to
\cite{CardaliaguetMunozPorretta2024} for support propagation and free
boundary regularity.

As a motivation for the present work, we recall the vanishing discount
problem for scalar Hamilton--Jacobi equations.  Let
\(G:\Td\times\R^d\to\R\) be a coercive Hamiltonian.  It is well known that,
for a suitable constant \(\bar c\), the ergodic problem
\begin{equation}\label{eq:ergodic-HJ-intro}
        G(x,Dv)=\bar c
        \qquad\text{in }\Td
\end{equation}
admits viscosity solutions; see the seminal work
\cite{LionsPapanicolaouVaradhan1987}.  This equation appears naturally in
several asymptotic problems for Hamilton--Jacobi equations, including
homogenization and large-time behavior.
A standard approximation of \eqref{eq:ergodic-HJ-intro} is the discounted
problem
\begin{equation}\label{eq:discounted-HJ-intro}
        \varepsilon v^\varepsilon+G(x,Dv^\varepsilon)=0
        \qquad\text{in }\Td .
\end{equation}
This equation arises in optimal control and differential games, where
\(\varepsilon>0\) represents a discount factor.  The term
\(\varepsilon v^\varepsilon\) provides the monotonicity in the unknown
function needed for the comparison principle, and hence the discounted
problem has a unique viscosity solution.  Moreover, coercivity gives
Lipschitz estimates for suitable normalizations.  By the Arzel\`a--Ascoli
theorem, along a subsequence,
\[
        \varepsilon v^\varepsilon\to -\bar c,
        \qquad
        v^\varepsilon-\min_{\Td}v^\varepsilon\to v
\]
locally uniformly as \(\varepsilon\to0\).  By the stability of viscosity
solutions, the limit \(v\) solves \eqref{eq:ergodic-HJ-intro}.

On the other hand, since \eqref{eq:ergodic-HJ-intro} may have multiple solutions in general,
subsequential convergence does not by itself identify a unique limit.  The
selection problem is to prove convergence of the whole normalized family and
to characterize the solution selected by the discount approximation.
For scalar Hamilton--Jacobi equations, this problem has been studied by weak
KAM theory, nonlinear adjoint methods, and viscosity Mather measures; see, for instance,
\cite{DaviniFathiIturriagaZavidovique2016,MitakeTran2017,
IshiiMitakeTran2017} and the references therein. Outside the standard convex setting, convergence along the whole family may
fail; for example, \cite{Ziliotto2019} constructed a counterexample for a
nonconvex Hamiltonian.

The vanishing discount problem for first-order mean field games with local
coupling was studied in
\cite{GomesMitakeTerai2020,MitakeTerai2023}.  In the mechanical Hamiltonian
case, \cite{GomesMitakeTerai2020} proved convergence under a
small-oscillation condition on the potential.  In that setting, the corresponding ergodic problem has a unique classical solution up to additive constants; hence the limiting value function is determined by uniqueness rather than by an additional selection mechanism.  The same work then turned to weak solutions introduced in
\cite{FerreiraGomes2018}, and showed that the ergodic problem may admit
several weak solutions \(u\) for the same density \(m\) and the same constant
\(\lambda\).  This led to a selection problem, and a selection criterion for
subsequential discounted limits was derived.  The work
\cite{MitakeTerai2023} developed the vanishing discount analysis in a
different weak solution framework, related to the variational formulations in
\cite{Cardaliaguet2015,CardaliaguetGraber2015,Graber2014LinearlyBounded}.
It established existence and uniqueness of discounted weak solutions,
stability toward the ergodic problem, examples of non-uniqueness for weak solutions to the
ergodic problem, and a necessary selection condition for subsequential
limits.

Related asymptotic problems have also been studied recently.  For instance,
\cite{IturriagaMendicoWangXu2025} studied time discretization and vanishing
discount problems for first-order discounted mean field game systems with
nonlocal coupling, using methods from weak KAM theory.
The work \cite{CaiQiSuTan2025} studied
one-dimensional stationary mean field games with vanishing potential.

The present paper starts from the selection viewpoint developed in
\cite{GomesMitakeTerai2020,MitakeTerai2023}.  We take as input the selection
condition derived there, and study when the corresponding selection
functional has a unique minimizer up to additive constants.  We show that
possible non-uniqueness of selected value functions is localized on the gap
part of the residual set.  If the gradients of selected value functions
coincide a.e. on this set, then subsequential convergence is upgraded to
convergence along the whole normalized discounted family.

\subsection{Preliminaries and setting}

We introduce the notation and weak solution framework used in the selection
argument.  We do not aim at the most general formulation; rather, we fix an
ergodic density \(m\) and an ergodic constant \(\lambda\), and study the
admissible value functions associated with this pair.

We first state the definition of weak solutions used in this paper. This is based on the work in \cite{Cardaliaguet2015,CardaliaguetGraber2015,Graber2014LinearlyBounded}. Throughout the paper, we fix \(s>1\) and \(q>1\), and denote by \(s'\) and \(q'\) their conjugate
exponents.  We also write \(p:=q'\) when comparing with the standard
variational notation. To define the selection functional later, we assume that
\begin{equation}\label{exponent-relation}
        W^{1,s}(\Td)\hookrightarrow L^{q'}(\Td).
\end{equation}

\begin{definition}[Weak solutions]\label{def:E}
A triple $(u,m,\lambda) \in W^{1,s}(\Td) \times L^q(\Td) \times \R$ is a weak solution to \eqref{eq:ergodic-intro} if 
\begin{enumerate}[label=(E\arabic*)]

\item $m \ge 0$ a.e., $\int_{\Td}m\dx=1$ and
\begin{equation}\label{integrability-flux}
        mD_pH(x,Du)\in L^{s'}(\Td;\R^d),
\end{equation}

\item The Hamilton--Jacobi inequality holds a.e. on \(\Td\):
\begin{equation}\label{eq:HJ-ineq}
        H(x,Du)\le f(x,m)+\lambda,
\end{equation}
\item The Hamilton--Jacobi equation holds a.e. on the positive-density region:
\begin{equation}\label{eq:HJ-eq-P}
        H(x,Du)=f(x,m)+\lambda
        \quad\text{a.e. on }\{m>0\},
\end{equation}
\item The continuity equation holds in distributions:
\begin{equation}\label{eq:FPweak-ep}
        -\operatorname{div}\bigl(mD_pH(x,Du)\bigr)=0
        \quad\text{in }\mathcal D'(\Td).
\end{equation}
\end{enumerate}
We denote $u \in \E(m,\lambda)$ when $(u,m,\lambda)$ is a weak solution to \eqref{eq:ergodic-intro} for fixed $(m,\lambda)$.
\end{definition}

\begin{remark}
The integrability requirement \eqref{integrability-flux}
 allows the continuity equation to be tested against \(W^{1,s}\)-functions.  
In the framework of weak solutions introduced by \cite{Cardaliaguet2015,CardaliaguetGraber2015,Graber2014LinearlyBounded}, this condition
is built into the choice of exponents.  For Hamiltonians $H(x,\cdot)$ with \(r\)-growth and
couplings yielding \(m\in L^q\), one has \(u\in W^{1,pr}\) and
\[
        mD_pH(x,Du)\in L^{(pr)'}(\Td;\R^d).
\]
\end{remark}

For the discounted problem, we use the natural analogue of Definition
\ref{def:E}, with the equations modified according to \eqref{eq:discounted};
well-posedness of the discounted problem in this weak-solution framework is
established in
\cite[Theorem 1.2]{MitakeTerai2023}.

We now introduce the selection functional.  This functional appears as a
necessary condition for vanishing discount limits: in the work of
\cite{GomesMitakeTerai2020,MitakeTerai2023}, every subsequential limit of the
normalized discounted value functions minimizes this functional among
$\E(m,\lambda)$ where \((m,\lambda)\) is the limiting density and the
ergodic constant; see Appendix \ref{sec:selection}.

\begin{definition}[Selection functional and selected minimizers]\label{def:Jmin}
For \(u\in\E(m,\lambda)\), define
\begin{equation*}
        J(u):=\int_{\Td}\langle u\rangle m\dx,
        \qquad
        \langle u\rangle:=u-\int_{\Td}u\dx.
\end{equation*}
The set of selected minimizers is
\begin{equation*}
        \M_J:=\argmin_{u\in\E(m,\lambda)}J(u).
\end{equation*}
\end{definition}

The functional \(J\) is well-defined by \eqref{exponent-relation} and
\(m\in L^q(\Td)\).  Since \(|\Td|=1\) and
\(\int_{\Td}m\dx=1\), we may also write
\begin{equation}\label{eq:J-linear-form}
        J(u)=\int_{\Td}u(x)(m(x)-1)\dx .
\end{equation}
In particular, \(J\) is invariant under the addition of constants to \(u\).
The main problem of this paper is to prove uniqueness of
this minimizer under structural conditions.

We next introduce the three regions that will be used in the uniqueness proof. Since \(m\in L^1\) is an equivalence class, the pointwise set \(\{m=0\}\) depends on the representative.  We therefore define the zero-density interior intrinsically.

\begin{definition}[Essential zero-density interior]\label{def:essential-zero-density}
The essential zero-density interior of \(m\) is
\begin{equation}\label{eq:essential-zero-density}
        \Ovac:=
        \bigcup\left\{
        U\subset\Td:\ U\text{ is open and }m=0\text{ a.e. in }U
        \right\}.
\end{equation}
After fixing a measurable representative of \(m\), we set
\begin{equation*}
        \Pos:=\{m>0\}\setminus\Ovac,
        \qquad
        \Res:=\Td\setminus(\Pos\cup\Ovac).
\end{equation*}
\end{definition}

The set \(\Ovac\) is open by definition and depends only on the \(L^1\)-equivalence class of \(m\), since the condition \(m=0\) a.e. in an open set is representative-independent.  Moreover, because \(\Td\) is second countable, the union in \eqref{eq:essential-zero-density} can be reduced to a countable subunion; hence \(m=0\) a.e. in \(\Ovac\).  The sets \(\Pos\) and \(\Res\) may depend on the chosen representative of \(m\), but only up to Lebesgue null sets.  All statements involving \(\Pos\) and \(\Res\) will therefore be understood in the a.e. sense.  With this convention, we have the measurable decomposition
\[
        \Td=\Ovac\dot\cup\Pos\dot\cup\Res
\]
up to null sets.

\subsection{Contributions and main results}

We now state the structural criterion which is the main result of this paper.
The proof is based on a decomposition of the state space into the
positive-density region, the essential zero-density interior, and a residual
set.  The positive-density region is treated by the monotonicity argument,
the zero-density interior by a local bump construction used in Perron's method, and the
residual set by separating the threshold part from the gap part.

Throughout the paper, we assume the following structural conditions on the
Hamiltonian \(H\) and the coupling \(f\).

\begin{assumption}[Hamiltonian]\label{ass:H}
The Hamiltonian \(H:\Td\times\R^d\to\R\) satisfies the following conditions.
\begin{enumerate}[label=(H\arabic*)]
\item \(H\in C^1(\Td\times\R^d)\).
\item For every \(x\in\Td\), the map \(p\mapsto H(x,p)\) is strictly convex.
\item \(H\) is coercive in \(p\), uniformly in \(x\):
\[
        \lim_{|p|\to\infty}\inf_{x\in\Td}H(x,p)=+\infty.
\]
\end{enumerate}
\end{assumption}

\begin{assumption}[Coupling]\label{ass:f}
The coupling \(f:\Td\times[0,\infty)\to\R\) satisfies the following
conditions.
\begin{enumerate}[label=(F\arabic*)]
\item \(f\) is continuous.
\item For every \(x\in\Td\), the map \(m\mapsto f(x,m)\) is strictly
increasing.
\item There exists \(C>0\) such that
\[
        |f(x,r)|\le C(1+r^{q-1})
        \qquad
        \text{for all }(x,r)\in\Td\times[0,\infty).
\]
\end{enumerate}
\end{assumption}

We use the following compactness and stability input for the vanishing
discount approximation.  Sufficient conditions for this input are given in
\cite[Theorem 1.3]{MitakeTerai2023}.

\begin{assumption}[Compactness and stability]\label{ass:discounted-compactness}
Let \((u^\varepsilon,m^\varepsilon) \in W^{1,s}(\Td)\times L^q(\Td)\) be a family of weak solutions to
\eqref{eq:discounted}.  For every sequence \(\varepsilon_n\to 0\), there
exist a subsequence, not relabeled, and a triple
\((\bar u,m,\lambda)\in W^{1,s}(\Td)\times L^q(\Td)\times\R\) such that
\[
    \langle u^{\varepsilon_n}\rangle
    \rightharpoonup \bar u
    \quad\text{weakly in } W^{1,s}(\Td),
\]
\[
    m^{\varepsilon_n}
    \rightharpoonup m
    \quad\text{weakly in } L^q(\Td),
\]
and
\[
    \varepsilon_n\int_{\Td}u^{\varepsilon_n}\dx
    \to -\lambda.
\]
We also assume that \((\bar u,m,\lambda)\) is a weak solution to
\eqref{eq:ergodic-intro}.
\end{assumption}

Before stating the residual gradient condition, we refine the residual
decomposition.  Set
\begin{equation*}
        \ell(x):=\lambda+f(x,0),
        \qquad
        K_x:=\{p\in\R^d:\ H(x,p)\le \ell(x)\},
\end{equation*}
and
\begin{equation*}
        \underline H(x):=\min_{p\in\R^d}H(x,p).
\end{equation*}
By the continuity of \(H\) and the uniform coercivity in \(p\),
\(\underline H\) is continuous on \(\Td\).  Moreover, since \(H(x,\cdot)\) is
strictly convex, \(K_x\) is a singleton exactly when
\[
        \ell(x)=\underline H(x).
\]
If \(\ell(x)<\underline H(x)\), then \(K_x=\emptyset\); if
\(\ell(x)>\underline H(x)\), then \(K_x\) contains more than one point.

Since \(m=0\) a.e. on \(\Res\), every \(u\in\E(m,\lambda)\) satisfies
\[
        H(x,Du)\le \ell(x)
        \quad\text{for a.e. }x\in\Res.
\]
Thus \(\Res\cap\{\ell<\underline H\}\) is null whenever
\(\E(m,\lambda)\neq\emptyset\).  Hence, up to null sets, we can write
\begin{equation*}
        \ResZero:=\Res\cap\{\ell=\underline H\},
        \qquad
        \ResGap:=\Res\cap\{\ell>\underline H\}.
\end{equation*}

On \(\ResZero\), no additional gradient condition is needed: the inequality
\(H(x,Du)\le \ell(x)=\underline H(x)\) forces \(Du\) to be the unique minimizer
of \(p\mapsto H(x,p)\).  Thus every admissible value function has the same
gradient a.e. on \(\ResZero\).

\begin{assumption}[Gradient uniqueness on the gap residual set]\label{ass:residual}
For any \(u_0,u_1\in\M_J\),
\begin{equation*}
        Du_0=Du_1\quad\text{a.e. on }\ResGap.
\end{equation*}
\end{assumption}

\begin{remark}
Assumption \ref{ass:residual} should be viewed as an additional sufficient
criterion.  We prove below that gradient uniqueness holds outside
\(\ResGap\).  Thus Assumption \ref{ass:residual} rules out the only remaining
possible obstruction.
\end{remark}

We now state the main result of this paper.

\begin{theorem}[Uniqueness criterion and convergence]\label{thm:main-unique}
Suppose that Assumptions \ref{ass:H}, \ref{ass:f}, and \ref{ass:residual} hold.
Then \(\M_J\) contains at most one element up to additive constants.

Consequently, if Assumption \ref{ass:discounted-compactness} holds, the normalized discounted value functions converge along the whole family:
\[
        \langle u^\varepsilon\rangle \rightharpoonup u_*
        \quad\text{weakly in }W^{1,s}(\Td),
\]
where \(u_*\) is the unique selected minimizer normalized by \(\int_{\Td}u_*\dx=0\).
\end{theorem}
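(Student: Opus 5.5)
The plan is to show that any two selected minimizers \(u_0,u_1\in\M_J\) have the same gradient a.e. on each of the three pieces \(\Pos\), \(\Ovac\), \(\Res\). Then \(D(u_0-u_1)=0\) a.e. on \(\Td\), and since \(\Td\) is connected and \(u_0-u_1\in W^{1,s}\), the difference is constant. On \(\ResGap\) this is exactly Assumption \ref{ass:residual}. On \(\ResZero\) it was already observed: \(H(x,Du)\le\underline H(x)\) forces \(Du\) to be the unique minimizer of the strictly convex function \(H(x,\cdot)\). The remaining work is on \(\Pos\) and \(\Ovac\).

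\textbf{Step 1: monotonicity on \(\Pos\).} Take \(u_0,u_1\in\E(m,\lambda)\) (not necessarily minimizers). Since \(mD_pH(x,Du_i)\in L^{s'}\) and \(u_0-u_1\in W^{1,s}\), the continuity equation for \(u_1\) and for \(u_0\) may be tested with \(u_0-u_1\), giving
\[
\int_{\Td} m\,\langle D_pH(x,Du_1)-D_pH(x,Du_0),\,D(u_0-u_1)\rangle\dx=0 .
\]
On \(\{m>0\}\) we have \(H(x,Du_0)=H(x,Du_1)=f(x,m)+\lambda\). Convexity gives
\[
H(x,Du_0)\ge H(x,Du_1)+\langle D_pH(x,Du_1),D(u_0-u_1)\rangle ,
\]
and the symmetric inequality with the roles of \(u_0,u_1\) exchanged. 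Hence each of the two terms
\[
\langle D_pH(x,Du_1),D(u_0-u_1)\rangle,\qquad \langle D_pH(x,Du_0),D(u_1-u_0)\rangle
\]
is \(\le 0\) a.e. on \(\{m>0\}\). Their sum \(\langle D_pH(x,Du_1)-D_pH(x,Du_0),D(u_0-u_1)\rangle\) is therefore \(\le0\) there, and it integrates to zero against \(m\). So each term vanishes \(m\)-a.e., and convexity then gives equality in the supporting-hyperplane inequality. By strict convexity of \(H(x,\cdot)\), equality in that inequality forces \(Du_0=Du_1\) a.e. on \(\{m>0\}\supset\Pos\).

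\textbf{Step 2: the bump argument on \(\Ovac\).} This is where the minimization of \(J\) enters, and it is the main obstacle. On \(\Ovac\) we have \(m=0\), so \(J(u)=-\int u\dx+\text{const}\) depends only on \(u\) there, and the only constraint is \(H(x,Du)\le\ell(x)\). The continuity equation imposes nothing: the flux \(mD_pH\) vanishes on \(\Ovac\) regardless of \(Du\).

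Given \(u_0,u_1\in\M_J\), I would form \(w:=\max(u_0,u_1+c)\) with the constant \(c\) to be chosen, restricting the modification to \(\Ovac\) or to a component of it. More precisely, set \(w=\max(u_0,u_1+c)\) on \(\Ovac\) and \(w=u_0\) elsewhere. For this \(w\) to be admissible one needs \(u_1+c\le u_0\) near \(\partial\Ovac\), and the trace issue is delicate.

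Cleaner is the local Perron-type bump. Suppose \(Du_0\ne Du_1\) on a positive-measure subset of \(\Ovac\). Then on some ball \(B\Subset\Ovac\) the function \(v:=\max(u_0,u_1+c)\), with \(c\) tuned so that \(u_1+c<u_0\) on \(\partial B\) but \(u_1+c>u_0\) somewhere inside, is again admissible. Indeed, a maximum of a.e. subsolutions of a convex HJ inequality is a subsolution, and nothing changes outside \(B\). For this I must handle \(W^{1,s}\) functions, using continuity from coercivity, since \(Du\) is bounded by the constraint, so \(u\) is Lipschitz on \(\Ovac\). The modified \(v\) satisfies \(v\ge u_0\) with strict inequality on a set of positive measure, so \(J(v)<J(u_0)\), contradicting minimality.

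The hard part is choosing \(B\) and \(c\) so that \(u_1-u_0\) has a strict interior maximum in \(B\) relative to its boundary values. I would argue that, since the two gradients differ on a positive-measure set, \(u_1-u_0\) is non-constant on some ball, and a local strict maximum can be produced after adding a small concave perturbation. That perturbation must be absorbed using strict slack in \(H\le\ell\). If no slack is available, I would instead use \(\max(u_0,u_1)\) globally on a connected component together with the symmetric \(\min\) argument, comparing \(J\) values. Combining this with the fact that \(J(u_0)=J(u_1)\) should show \(u_0-u_1\) is locally constant on \(\Ovac\).

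\textbf{Step 3: convergence of the whole family.} Under Assumption \ref{ass:discounted-compactness}, every sequence \(\varepsilon_n\to0\) has a subsequence with \(\langle u^{\varepsilon_n}\rangle\rightharpoonup\bar u\), where \(\bar u\in\E(m,\lambda)\) for the limit pair. By the appendix selection result, \(\bar u\in\M_J\), and \(\int\bar u\dx=0\). I also need \((m,\lambda)\) independent of the subsequence. I would take this from uniqueness of the ergodic \(m\), which follows from monotonicity of \(f\) via the Lasry--Lions argument, and of \(\lambda\); alternatively, it is part of the framework in which the assumptions are stated. Uniqueness up to constants then gives \(\bar u=u_*\) for every subsequence, so the whole family converges by the usual sub-subsequence argument.
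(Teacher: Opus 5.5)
\textbf{Gap in Step 2.} Steps 1 and 3 are fine. Step 1 is the paper's monotonicity argument in a slightly different form, and the uniqueness of \((m,\lambda)\) you invoke in Step 3 is Proposition~\ref{prop:unique-density-lambda}. The gap is in Step 2.

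Comparing \(u_0\) directly with \(u_1+c\) cannot work. The bump \(\max(u_0,u_1+c)\) only yields a contradiction if \(u_1-u_0\) has a strict interior maximum on some ball, and nothing forces this. Consider the local configuration \(d=1\), \(H=p^2/2\), \(\ell\equiv 1/2\) on an interval inside \(\Ovac\), with \(u_0=x\) and \(u_1=-x\) there. Then \(u_1-u_0=-2x\) is monotone. Both functions satisfy \(H(x,Du_i)=\ell\) everywhere, so there is no slack to absorb your concave perturbation: adding \(-\gamma|x-x_0|^2\) to \(u_1\) immediately violates \(|Du|\le 1\).

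Pairwise \(\max/\min\) comparisons can at best show two things. First, that \(u_1-u_0\) has no strict interior extremum. Second, that \(\max(u_0,u_1)\) and \(\min(u_0,u_1)\) are again minimizers, since \(J(\max)+J(\min)=J(u_0)+J(u_1)\); this is all your fallback with \(J(u_0)=J(u_1)\) gives. A linear difference like \(-2x\) is consistent with all of this, so you cannot conclude \(D(u_1-u_0)=0\).

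\textbf{The two missing ingredients.} First, build the bump from a \(C^1\) test function, not from \(u_1\). Suppose \(\phi\) touches \(u\in\M_J\) from below at \(x_0\in\Ovac\) with \(H(x_0,D\phi(x_0))<\ell(x_0)\). Subtracting \(\gamma|x-x_0|^2\) makes the minimum strict, and continuity gives slack \(H(x,D\psi)\le\ell(x)-\eta\) on a small ball \(B\). The function equal to \(\max\{u,\psi+\delta\}\) in \(B\) and to \(u\) outside is then in \(\E(m,\lambda)\) and has strictly smaller \(J\). So every minimizer is a locally Lipschitz viscosity solution of \(H(x,Du)=\ell(x)\) in \(\Ovac\), and in particular \(H(x,Du)=\ell\) a.e. there.

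Second, the slack you were looking for comes from convexity of the admissible set. By Step 1 and convexity of \(H(x,\cdot)\), the midpoint \(w=(u_0+u_1)/2\) lies in \(\E(m,\lambda)\), and by linearity of \(J\) it lies in \(\M_J\). Hence \(w\) is also a viscosity solution in \(\Ovac\), so \(H(x,Dw)=H(x,Du_0)=H(x,Du_1)=\ell(x)\) a.e. in \(\Ovac\). Strict convexity then forces \(Du_0=Du_1\) a.e. there. In your example, \(w\equiv 0\) has \(H(0)=0<1/2\), and the test-function bump applied to \(w\) gives exactly the contradiction.
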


Section \ref{sec:examples} applies the criterion to obtain concrete
convergence results for one-dimensional problems and for a specific
Hamiltonian in the multidimensional setting.

The paper is organized as follows.  Section \ref{sec:ergodic-uniqueness}
reviews uniqueness of the density and the ergodic constant and gives a simple
example of non-uniqueness of the value function.  Section
\ref{sec:grad-rigid} proves the abstract uniqueness criterion.  Section
\ref{sec:examples} gives concrete applications.

\section{Uniqueness and non-uniqueness in the ergodic problem}\label{sec:ergodic-uniqueness}

We begin by recalling the standard uniqueness property of the density and the
ergodic constant.  This is a basic consequence of the monotonicity structure
in mean field games, and it explains why it is natural to fix \((m,\lambda)\)
and study the possible multiplicity of the value function \(u\).

Such a uniqueness property is also encoded in the variational formulation of
first-order mean field games; see Cardaliaguet and Graber
\cite{CardaliaguetGraber2015}. For completeness, we give a direct proof based on the Lasry--Lions monotonicity method for weak solutions satisfying
Definition \ref{def:E}.

\begin{proposition}
\label{prop:unique-density-lambda}
Suppose that Assumptions \ref{ass:H} and \ref{ass:f} hold.  Let
\[
        (u_i,m_i,\lambda_i),\qquad i=0,1,
\]
be two weak solutions to \eqref{eq:ergodic-intro}.  Then
\[
        m_0=m_1\quad\text{a.e. in }\Td,
        \qquad
        \lambda_0=\lambda_1.
\]
Moreover, for the set \(\Pos\) associated with this common density \(m\), we have
\begin{equation}\label{grad-unique-pos-density}
        Du_0=Du_1
        \quad\text{a.e. on }\Pos.
\end{equation}
\end{proposition}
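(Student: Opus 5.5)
The plan is the Lasry--Lions monotonicity argument. I test the difference of the two continuity equations against \(u_0-u_1\) and the difference of the Hamilton--Jacobi relations against \(m_0-m_1\), then add the results. Condition \eqref{integrability-flux}, together with \(u_i\in W^{1,s}\), makes \(u_0-u_1\) an admissible test function in \eqref{eq:FPweak-ep}. Testing gives
\[
        \int_{\Td} m_i\,D_pH(x,Du_i)\cdot(Du_0-Du_1)\dx=0,\qquad i=0,1 .
\]
Next I integrate \(\bigl(H(x,Du_0)-H(x,Du_1)\bigr)(m_0-m_1)\). On \(\{m_0>0\}\) I use \eqref{eq:HJ-eq-P} for \(u_0\) and \eqref{eq:HJ-ineq} for \(u_1\); on \(\{m_1>0\}\) I swap the roles. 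This expands the integral as
\[
        \int m_0\bigl(H_0-H_1\bigr)-\int m_1\bigl(H_0-H_1\bigr)
        \ \ge\ \int (m_0-m_1)\bigl(f(x,m_0)-f(x,m_1)\bigr)\dx+(\lambda_0-\lambda_1)\int(m_0-m_1)\dx ,
\]
where \(H_i:=H(x,Du_i)\). The last term vanishes because both densities have unit mass.

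Integrability has to be checked. The product \(m_i H(x,Du_i)=m_i(f(x,m_i)+\lambda_i)\) lies in \(L^1\) by (F3), since \(m_i\in L^q\) and \(f(x,m_i)\in L^{q'}\). For the cross terms \(m_iH(x,Du_j)\), only the one-sided bound \(H(x,Du_j)\le f(x,m_j)+\lambda_j\) is available. Coercivity bounds \(H\) from below, so these terms are integrable once they are arranged in the combination above; I expect this bookkeeping to be routine.

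I then combine the two identities using the convexity inequality
\[
        H(x,p)-H(x,p')-D_pH(x,p')\cdot(p-p')\ \ge 0 ,
\]
with equality iff \(p=p'\), by (H2). Subtracting the tested continuity equations turns the Hamilton--Jacobi relation into
\[
        0\ \ge\ \int m_0\bigl[H_1-H_0-D_pH(x,Du_0)\cdot(Du_1-Du_0)\bigr]
        +\int m_1\bigl[H_0-H_1-D_pH(x,Du_1)\cdot(Du_0-Du_1)\bigr]
        +\int (m_0-m_1)\bigl(f(x,m_0)-f(x,m_1)\bigr).
\]
Every integrand on the right is nonnegative: the first two by convexity, the third by (F2). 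Hence each integral is zero.

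Strict monotonicity of \(f\) then gives \(m_0=m_1=:m\) a.e. The vanishing of the Bregman terms, combined with strict convexity, gives \(Du_0=Du_1\) a.e. on \(\{m>0\}\), and in particular a.e. on \(\Pos\subset\{m>0\}\); this is \eqref{grad-unique-pos-density}.

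For \(\lambda\), I argue as follows. Because \(\int m=1\), the set \(\{m>0\}\) has positive measure. On it, \eqref{eq:HJ-eq-P} gives
\[
        H(x,Du_0)-f(x,m)=\lambda_0,\qquad H(x,Du_1)-f(x,m)=\lambda_1 .
\]
Since \(Du_0=Du_1\) there, the left-hand sides coincide, so \(\lambda_0=\lambda_1\).

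The main obstacle is justifying the testing and the integrability of the cross terms \(m_iH(x,Du_j)\) and \(m_iD_pH(x,Du_i)\cdot Du_j\) when only one-sided bounds are available. If direct integrability fails, I would truncate: test with \(T_k(u_0-u_1)\), or restrict to sublevel sets of \(|Du_j|\), and pass to the limit by monotone convergence, using that all the integrands are nonnegative.
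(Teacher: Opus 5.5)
Your proof is correct. It uses the same Lasry--Lions monotonicity method as the paper, packaged differently.

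The paper applies convexity pointwise on $\{m_0>0\}$ \emph{before} integrating. This gives $m_0\bigl(f(x,m_0)-f(x,m_1)+\lambda_0-\lambda_1\bigr)\le -m_0D_pH(x,Du_0)\cdot(Du_1-Du_0)$, so the Hamiltonian itself is never integrated. It discards the convexity remainder and obtains $m_0=m_1$ from the monotone term. It then tests both continuity equations a second time to get $\int m\bigl(D_pH(x,Du_0)-D_pH(x,Du_1)\bigr)\cdot(Du_0-Du_1)\dx=0$, and strict convexity gives the gradient identity.

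You instead integrate $(H_0-H_1)(m_0-m_1)$ and keep the Bregman remainders. Density uniqueness and gradient uniqueness on $\{m>0\}$ then come out of a single inequality; once $m_0=m_1$, your two Bregman terms sum to exactly the paper's symmetric quantity.

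The price is that you must integrate the cross terms $m_iH(x,Du_j)$. You flag this as the main obstacle, but it is not one. By (H1) and (H3), $H$ is bounded below on $\Td\times\R^d$. By \eqref{eq:HJ-ineq} and (F3), $H(x,Du_j)\le f(x,m_j)+\lambda_j\in L^{q'}$. Hence $H(x,Du_j)\in L^{q'}$, and $m_iH(x,Du_j)\in L^1$ by H\"older. No truncation is needed. The flux terms are integrable by \eqref{integrability-flux} and $Du_j\in L^s$, which also justifies testing with $u_0-u_1$.
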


\begin{proof}
Set
\[
        p_i:=Du_i,
        \qquad
        f_i:=f(x,m_i),
        \qquad i=0,1.
\]

We first prove uniqueness of the density.  On \(\{m_0>0\}\), the
Hamilton--Jacobi equality \eqref{eq:HJ-eq-P} for \(u_0\) and the Hamilton--Jacobi inequality \eqref{eq:HJ-ineq} for
\(u_1\) give
\[
        H(x,p_0)-H(x,p_1)
        \ge
        f_0-f_1+\lambda_0-\lambda_1.
\]
On the other hand, by convexity of \(H(x,\cdot)\),
\[
        H(x,p_1)-H(x,p_0)
        \ge
        D_pH(x,p_0)\cdot(p_1-p_0),
\]
and hence
\[
        f_0-f_1+\lambda_0-\lambda_1
        \le
        -D_pH(x,p_0)\cdot(p_1-p_0).
\]
Multiplying by \(m_0\) and using the continuity equation \eqref{eq:FPweak-ep} for \(u_0\) tested
by \(u_1-u_0\), we obtain
\[
        \int_{\Td}m_0(f_0-f_1)\dx+\lambda_0-\lambda_1
        \le0.
\]

Similarly,  we get
\[
        \int_{\Td}m_1(f_1-f_0)\dx+\lambda_1-\lambda_0
        \le0.
\]
Adding the two inequalities yields
\[
        \int_{\Td}
        \bigl(f(x,m_0)-f(x,m_1)\bigr)(m_0-m_1)\dx
        \le0.
\]
By the monotonicity of \(f\) in the second variable, the integrand is
nonnegative.  Therefore it must vanish a.e. in $\Td$.  Since \(f(x,\cdot)\) is strictly
increasing, we conclude that
\[
        m_0=m_1
        \quad\text{a.e. in }\Td.
\]

Set \(m:=m_0=m_1\).  We next prove \eqref{grad-unique-pos-density}.  Using the continuity equations \eqref{eq:FPweak-ep} for \(u_0\) and \(u_1\), tested
by \(u_1-u_0\) and \(u_0-u_1\), respectively, we get
\[
        \int_{\Td}
        mD_pH(x,p_0)\cdot(p_1-p_0)\dx=0,
\]
and
\[
        \int_{\Td}
        mD_pH(x,p_1)\cdot(p_0-p_1)\dx=0.
\]
Adding these identities gives
\[
        \int_{\Td}
        m\bigl(D_pH(x,p_0)-D_pH(x,p_1)\bigr)\cdot(p_0-p_1)\dx=0.
\]

By the strict convexity of \(H(x,\cdot)\), we have
\[
        p_0=p_1
        \quad\text{a.e. on }\{m>0\}.
\]
In particular, we have \eqref{grad-unique-pos-density}.

Finally, since \(m\ge0\) and \(\int_{\Td}m\dx=1\), the set \(\{m>0\}\) has
positive measure.  On this set, the Hamilton--Jacobi equality \eqref{eq:HJ-eq-P} gives
\[
        H(x,Du_i)=f(x,m)+\lambda_i,
        \qquad i=0,1.
\]
Since \(Du_0=Du_1\) a.e. on \(\{m>0\}\), we obtain $\lambda_0=\lambda_1$.
\end{proof}

We next recall a simple example of multiple value functions in $\{m=0\}$. 

\begin{example}
\label{ex:multiplicity-u}
Let \(d=1\), and consider
\[
        H(x,p)=\frac12|p|^2+V(x),
        \qquad
        f(x,m)=m.
\]
For instance, take
\[
        V(x)=\pi\cos(2\pi x),
        \qquad
        \lambda=0.
\]
Then
\[
        m(x):=(V(x)-\lambda)_+
        =
        \pi(\cos(2\pi x))_+
\]
satisfies
\[
        \int_{\Tone}m(x)\dx=1,
\]
and the open set
\[
        O:=\{x\in\Tone:\ V(x)<\lambda\}
\]
is nonempty.

Then \(u\equiv0\) belongs to \(\E(m,\lambda)\).  Indeed, on
\(\{m>0\}=\{V>\lambda\}\), one has
\[
        H(x,0)=V(x)=m(x)+\lambda,
\]
while on \(\{m=0\}=\{V\le\lambda\}\),
\[
        H(x,0)=V(x)\le\lambda=f(x,0)+\lambda.
\]
The continuity equation is satisfied since \(Du=0\).

The value function, however, is not unique.  Take a nonzero
\(\phi\in C_c^1(O)\), and set \(\psi=\delta\phi\), where \(\delta>0\) is
small enough that
\[
        \frac12|\psi_x|^2+V\le\lambda
        \quad\text{a.e. in }O.
\]
Extending \(\psi\) by zero outside \(O\), we obtain another element of
\(\E(m,\lambda)\).  Indeed, the Hamilton--Jacobi equality is unchanged on
\(\{m>0\}\), the Hamilton--Jacobi inequality holds on \(\{m=0\}\), and the
continuity equation is unchanged because the modification is supported where
\(m=0\).

Thus the same density \(m\) and the same ergodic constant \(\lambda\) may
admit multiple value functions $u$.  This illustrates why the vanishing
discount problem leads to a selection problem for the value function.
\end{example}

\section{Uniqueness of the gradient for selected minimizers}\label{sec:grad-rigid}
In the previous section, we proved uniqueness of the gradient on the
positive-density region \(\Pos\).  We now prove the corresponding result on
the essential zero-density interior \(\Ovac\), and then prove the main
theorem.

\subsection{Uniqueness of the gradient on the zero-density interior}
\label{sec:bump}

We first prepare the following statement. 
\begin{lemma}\label{lem:local-admissible}
Suppose that Assumptions \ref{ass:H} and \ref{ass:f} hold.
Let \(u\in\E(m,\lambda)\), and let \(B\Subset\Ovac\) be an open ball.  Let \(\tilde u\in W^{1,s}(\Td)\) satisfy
\[
        \tilde u=u\quad\text{a.e. on }\Td\setminus B
\]
and suppose that \(\tilde u\) is locally Lipschitz in \(B\) and is a viscosity subsolution of
\begin{equation*}
        H(x,Dv)=\ell(x)\quad\text{in }B.
\end{equation*}
 Then \(\tilde u\in\E(m,\lambda)\).
\end{lemma}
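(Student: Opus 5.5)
We check the four conditions (E1)--(E4) of Definition~\ref{def:E} for \(\tilde u\), with the same \(m\) and \(\lambda\). The key geometric fact is that \(\bar B\subset\Ovac\). Since \(\Ovac\) is open and \(m=0\) a.e. in \(\Ovac\), there is an open neighbourhood \(U\) of \(\bar B\) with \(m=0\) a.e. in \(U\). Hence on \(B\) all density-dependent terms vanish, and \(f(x,m)+\lambda=\ell(x)\) a.e. in \(B\). Outside \(B\) we have \(\tilde u=u\) a.e. For Sobolev functions this gives \(D\tilde u=Du\) a.e. on \(\Td\setminus B\), since gradients of \(W^{1,s}\) functions agree a.e. on sets where the functions agree. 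Thus every condition reduces to a statement on \(B\) alone.

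(E1) is immediate. The density is unchanged, and the flux \(mD_pH(x,D\tilde u)\) coincides with \(mD_pH(x,Du)\) a.e. off \(B\) and vanishes a.e. on \(B\). It therefore lies in \(L^{s'}\), and in fact equals the old flux a.e.

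(E3) holds because \(\{m>0\}\cap B\) is null, so the equality only needs checking off \(B\), where it is inherited from \(u\).

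(E4) follows from the equality of fluxes just noted. We have \(mD_pH(x,D\tilde u)=mD_pH(x,Du)\) a.e. in \(\Td\), so the distributional divergence is the same and is zero.

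The only genuine step is (E2) on \(B\), namely showing \(H(x,D\tilde u)\le\ell(x)\) a.e. in \(B\). We use that \(\tilde u\) is locally Lipschitz in \(B\). By Rademacher's theorem it is differentiable a.e. in \(B\), and the classical gradient coincides a.e. with the weak gradient. At any point \(x_0\) of differentiability, \(D\tilde u(x_0)\) belongs to the superdifferential \(D^+\tilde u(x_0)\). The viscosity subsolution property then gives \(H(x_0,D\tilde u(x_0))\le\ell(x_0)\).

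Strictly, the viscosity definition is formulated via \(C^1\) test functions touching from above. So we need the standard equivalence between this and the superdifferential formulation, or directly the construction of a \(C^1\) function touching \(\tilde u\) from above at a point of differentiability. Continuity of \(H\) and \(\ell\) (by (H1) and (F1)) is used here. This equivalence is the one point requiring care, and it is standard. Notably, convexity of \(H\) is not needed for this direction.

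Combining the two regions gives (E2) a.e. on \(\Td\). Hence \(\tilde u\in\E(m,\lambda)\).
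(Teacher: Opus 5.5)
Your proposal is correct and follows essentially the same route as the paper: the flux identity $mD_pH(x,D\tilde u)=mD_pH(x,Du)$ a.e.\ (from $m=0$ a.e.\ in $B$ and $D\tilde u=Du$ a.e.\ off $B$) gives (E1) and (E4), and (E3) follows because $\{m>0\}\cap B$ is null. The inequality $H(x,D\tilde u)\le\ell(x)$ a.e.\ in $B$ comes from the viscosity subsolution property at points of differentiability, and your extra remarks (gradients agree where Sobolev functions agree, the $C^1$ touching-from-above construction, convexity not being needed for this direction) only make explicit what the paper leaves implicit.
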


\begin{proof}
Since \(B\Subset\Ovac\), one has \(m=0\) a.e. in \(B\).  Also, because $D\tilde u=Du$ a.e. on $\Td\setminus B$,
we have
\[
        mD_pH(x,D\tilde u)=mD_pH(x,Du)
        \quad\text{a.e. on }\Td .
\]
Therefore \eqref{integrability-flux} and 
\eqref{eq:FPweak-ep} for \(\tilde u\) follow from those for \(u\).  

It remains to verify the Hamilton--Jacobi conditions.  Since \(\tilde u\) is
locally Lipschitz and a viscosity subsolution in \(B\), we have
\[
        H(x,D\tilde u)\le \ell(x)
        \quad\text{for a.e. }x\in B.
\]
On \(B\), \(m=0\) a.e.; hence
\[
        H(x,D\tilde u)
        \le \lambda+f(x,0)
        =
        \lambda+f(x,m)
        \quad\text{a.e. in }B.
\]
On \(\Td\setminus B\), we have \(D\tilde u=Du\) a.e., and therefore
\eqref{eq:HJ-ineq} for \(\tilde u\) follows from \eqref{eq:HJ-ineq} for
\(u\). 
Finally, since \(B\cap\{m>0\}\) is null and \(D\tilde u=Du\) a.e. on
\(\Td\setminus B\), the equality on the positive-density region
\eqref{eq:HJ-eq-P} follows from \eqref{eq:HJ-eq-P} for \(u\).
\end{proof}

We now prove that selected minimizers are viscosity solutions in the zero-density interior. The proof below is a local version of the bump construction used in Perron's method for viscosity solutions; see \cite{Ishii1987Perron}.  In the standard Perron argument, maximality is provided by the envelope construction.  Here it is provided by the minimality of the selection functional.
\begin{lemma}\label{lem:bump-super}
Suppose that Assumptions \ref{ass:H} and \ref{ass:f} hold. Let \(u\in\M_J\).  Then \(u\) is locally Lipschitz continuous on \(\Ovac\) and a viscosity solution to
\begin{equation}\label{eq:vac-HJ2}
        H(x,Du)=\ell(x):=\lambda+f(x,0)
        \quad\text{in }\Ovac.
\end{equation}
\end{lemma}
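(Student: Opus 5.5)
Since $m=0$ a.e. in $\Ovac$, condition (E2) gives $H(x,Du)\le\ell(x)$ a.e. in $\Ovac$. By uniform coercivity (H3) and continuity of $\ell$, $Du$ is locally bounded in $\Ovac$. A $W^{1,s}$ function with locally bounded gradient has a locally Lipschitz representative, so $u$ is locally Lipschitz on $\Ovac$. For convex $H(x,\cdot)$, an a.e. subsolution that is Lipschitz is also a viscosity subsolution. This is standard: one can mollify and use Jensen's inequality together with continuity of $H$ in $x$. So the subsolution property is routine, and the real content is the supersolution property.

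For the supersolution property, I argue by contradiction using a local bump, as in Perron's method. Suppose there exist $x_0\in\Ovac$ and $\varphi\in C^1$ touching $u$ from below at $x_0$ (say strictly, after adding $-|x-x_0|^2$) with $H(x_0,D\varphi(x_0))<\ell(x_0)$. By continuity, $H(x,D\varphi(x))<\ell(x)$ on a small ball $B=B_r(x_0)\Subset\Ovac$. For small $\delta>0$ set $\psi:=\varphi+\delta$. Then $\psi<u$ on $\partial B$ and near it, because $u-\varphi\ge c>0$ on an annulus by strictness, while $\psi(x_0)>u(x_0)$.

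Now define $\tilde u:=\max(u,\psi)$ in $B$ and $\tilde u:=u$ outside $B$. This function agrees with $u$ near $\partial B$, so it lies in $W^{1,s}$ and is locally Lipschitz in $B$. It is a maximum of two viscosity subsolutions of $H(x,Dv)=\ell$ in $B$, hence itself a subsolution. By Lemma \ref{lem:local-admissible}, $\tilde u\in\E(m,\lambda)$.

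The contradiction comes from comparing the functionals via \eqref{eq:J-linear-form}:
\[
J(\tilde u)-J(u)=\int_B(\tilde u-u)(m-1)\dx=-\int_B(\tilde u-u)\dx<0,
\]
since $m=0$ a.e. in $B$ and $\tilde u\ge u$ with strict inequality on a neighbourhood of $x_0$. This contradicts $u\in\M_J$. Here the minimality of $J$ plays the role of maximality in Perron's method: the weight $m-1=-1$ on the vacuum region means that raising $u$ there strictly lowers $J$.

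The main obstacle is the supersolution step, specifically making the bump rigorous at the level of representatives. This requires choosing the continuous representative on $\Ovac$ so that touching makes sense, ensuring strictness so the modification has compact support inside $B$, and checking that the maximum of a Lipschitz subsolution and a $C^1$ strict subsolution is still an a.e./viscosity subsolution. The last point holds for convex Hamiltonians, where a.e. and viscosity subsolutions coincide for Lipschitz functions.
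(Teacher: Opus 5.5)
Your proposal is correct and follows the paper's proof essentially step by step. The a.e.\ inequality on $\Ovac$ together with coercivity gives local Lipschitz regularity and, by convexity, the subsolution property. The supersolution property is then obtained by contradiction through the local bump $\max\{u,\psi+\delta\}$ on a small ball $B\Subset\Ovac$, admissibility via Lemma~\ref{lem:local-admissible}, and the strict decrease $J(\tilde u)-J(u)=-\int_B(\tilde u-u)\dx<0$. The differences are cosmetic: you sketch the a.e.-to-viscosity subsolution equivalence by mollification instead of citing it, and you fold the normalization $\psi(x_0)=u(x_0)$ into ``touching from below.''
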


\begin{proof}
We first prove that \(u\) is locally Lipschitz continuous on \(\Ovac\) and a viscosity subsolution in \(\Ovac\).
Because \(m=0\) a.e. in \(\Ovac\), the Hamilton--Jacobi inequality in the
definition of \(\E(m,\lambda)\) gives
\[
        H(x,Du)\le \lambda+f(x,0)=\ell(x)
        \quad\text{a.e. in }\Ovac.
\]
By the coercivity of \(H\) in \(p\), this inequality implies
\[
        u\in W^{1,\infty}_{\mathrm{loc}}(\Ovac).
\]
By the convexity of \(H\) in \(p\), and by the standard equivalence between
a.e. subsolutions and viscosity subsolutions for convex first-order
Hamilton--Jacobi equations, \(u\) is a viscosity subsolution of
\eqref{eq:vac-HJ2}; see for instance, \cite[Theorem 2.32]{Tran2021HJ}.

We next prove the supersolution property.  Suppose by contradiction that
\(u\) is not a viscosity supersolution at some \(x_0\in\Ovac\).  Then there
exists \(\phi\in C^1(\Ovac)\) such that \(u-\phi\) has a local minimum at
\(x_0\) and
\[
        H(x_0,D\phi(x_0))<\ell(x_0).
\]
Replacing \(\phi\) by
\[
        \phi(x)-\gamma |x-x_0|^2
\]
with \(\gamma>0\) sufficiently small, we may assume that the local minimum is
strict, without changing the gradient at \(x_0\).

Normalize the test function by setting
\[
        \psi(x):=\phi(x)+u(x_0)-\phi(x_0).
\]
Then \(u(x_0)=\psi(x_0)\), the function \(u-\psi\) has a strict local minimum
equal to \(0\) at \(x_0\), and
\[
        H(x_0,D\psi(x_0))<\ell(x_0).
\]
By continuity, there exist a ball \(B\Subset\Ovac\), centered at \(x_0\), and
\(\eta>0\) such that
\[
        H(x,D\psi(x))\le \ell(x)-\eta
        \quad\text{in }B.
\]
Taking \(B\) smaller if necessary, the strictness of the minimum gives
\[
        u-\psi\ge 2\alpha
        \quad\text{on }\partial B
\]
for some \(\alpha>0\).  Since \(u-\psi\) is continuous on \(\overline B\),
there exists \(\rho>0\) such that
\[
        u-\psi\ge \alpha
        \quad\text{whenever }x\in B
        \text{ and } \operatorname{dist}(x,\partial B)<\rho .
\]
Choose \(0<\delta<\alpha\), and define
\[
        \tilde u(x)=
        \begin{cases}
        \max\{u(x),\psi(x)+\delta\},&x\in B,\\
        u(x),&x\notin B.
        \end{cases}
\]
On the set
\[
        \{x\in B:\operatorname{dist}(x,\partial B)<\rho\},
\]
we have \(\psi+\delta<u\), and hence \(\tilde u=u\) in a neighborhood of
\(\partial B\).  Therefore the piecewise definition is compatible across
\(\partial B\).

Moreover, since \(u\in W^{1,\infty}_{\mathrm{loc}}(\Ovac)\) and
\(\psi\in C^1(B)\), the Sobolev chain rule and the identity
\[
        \max\{u,\psi+\delta\}
        =
        \frac{u+(\psi+\delta)+|u-(\psi+\delta)|}{2}
\]
imply
\[
        \max\{u,\psi+\delta\}\in W^{1,\infty}_{\mathrm{loc}}(B).
\]
Because \(\tilde u=u\) in a neighborhood of \(\partial B\), we obtain
\[
        \tilde u\in W^{1,s}(\Td).
\]
Also, near \(x_0\), one has
\[
        \psi(x_0)+\delta=u(x_0)+\delta>u(x_0),
\]
so \(\tilde u>u\) on a set of positive measure.

By the standard stability of viscosity subsolutions under finite maxima,
\(\tilde u\) is a viscosity subsolution of
\[
        H(x,Dv)=\ell(x)
        \quad\text{in }B.
\]
By Lemma \ref{lem:local-admissible}, we have $\tilde u\in\E(m,\lambda)$.

Finally, using \eqref{eq:J-linear-form} and the fact that \(m=0\) a.e. in
\(B\), we obtain
\[
        J(\tilde u)-J(u)
        =
        \int_{\Td}(\tilde u-u)(m-1)\dx
        =
        -\int_B(\tilde u-u)\dx<0.
\]
This contradicts the minimality of \(u\).  Hence \(u\) is a viscosity
supersolution in \(\Ovac\).
\end{proof}

These properties imply the following uniqueness result for the gradient on
\(\Ovac\).
\begin{proposition}\label{prop:ae-vac}
Suppose that Assumptions \ref{ass:H} and \ref{ass:f} hold. Let \(u_0,u_1\in\M_J\).  Then,
\begin{equation}\label{eq:proof-O}
        Du_0=Du_1\quad\text{a.e. on }\Ovac.
\end{equation}
\end{proposition}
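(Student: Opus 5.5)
The plan is to use the convexity of \(\E(m,\lambda)\) together with the linearity of \(J\), and then apply Lemma \ref{lem:bump-super} to the midpoint of the two minimizers. Given \(u_0,u_1\in\M_J\), set
\[
        w:=\tfrac12(u_0+u_1)\in W^{1,s}(\Td).
\]

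\textbf{Step 1: \(w\in\E(m,\lambda)\).} By definition, \(\Pos=\{m>0\}\setminus\Ovac\), and \(m=0\) a.e. on \(\Ovac\). Hence \(\Res\subset\{m=0\}\), and so \(\{m>0\}\subset\Pos\) up to a null set. By Proposition \ref{prop:unique-density-lambda}, \(Du_0=Du_1\) a.e. on \(\Pos\), so \(Dw=Du_0=Du_1\) a.e. on \(\{m>0\}\). This gives:
\begin{itemize}
\item \(mD_pH(x,Dw)=mD_pH(x,Du_0)\) a.e. on \(\Td\), so \eqref{integrability-flux} and \eqref{eq:FPweak-ep} for \(w\) follow from those for \(u_0\);
\item \eqref{eq:HJ-eq-P} for \(w\) is inherited from \(u_0\) on \(\{m>0\}\);
\item \eqref{eq:HJ-ineq} for \(w\) follows from convexity of \(H(x,\cdot)\): \(H(x,Dw)\le\frac12 H(x,Du_0)+\frac12 H(x,Du_1)\le f(x,m)+\lambda\) a.e.
\end{itemize}

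\textbf{Step 2: \(w\in\M_J\).} The functional \(J\) is linear by \eqref{eq:J-linear-form}. Therefore \(J(w)=\frac12J(u_0)+\frac12J(u_1)=\min_{\E(m,\lambda)}J\), so \(w\in\M_J\).

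\textbf{Step 3: the equation holds a.e. in \(\Ovac\).} Apply Lemma \ref{lem:bump-super} to \(u_0\), \(u_1\) and \(w\). All three are locally Lipschitz on \(\Ovac\) and are viscosity solutions of \(H(x,Dv)=\ell(x)\) there. By Rademacher's theorem, there is a full-measure subset of \(\Ovac\) on which all three are differentiable.

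At a point of differentiability of a viscosity solution, the \(C^1\) test functions touching from above and from below give both inequalities. This is the standard fact that the sub- and superdifferentials reduce to the gradient there. Hence, for a.e. \(x\in\Ovac\),
\[
        H(x,Dw(x))=\ell(x),\qquad H(x,Du_i(x))=\ell(x),\quad i=0,1.
\]
(The subsolution inequality alone would already suffice for \(u_i\).)

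\textbf{Step 4: conclusion by strict convexity.} Suppose \(Du_0(x)\neq Du_1(x)\) at such a point. Strict convexity of \(H(x,\cdot)\) (H2) then gives
\[
        \ell(x)=H\bigl(x,\tfrac12(Du_0+Du_1)\bigr)<\tfrac12H(x,Du_0)+\tfrac12H(x,Du_1)\le\ell(x),
\]
which is a contradiction. Hence \(Du_0=Du_1\) a.e. on \(\Ovac\), which is \eqref{eq:proof-O}.

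The main point needing care is Step 1, specifically that the continuity equation survives averaging. This works only because the gradients already coincide wherever \(m>0\), which is why Proposition \ref{prop:unique-density-lambda} must be invoked before Lemma \ref{lem:bump-super}. The passage from "viscosity solution" to "a.e. equality" at differentiability points in Step 3 is standard.
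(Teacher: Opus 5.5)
Your proposal is correct and follows the paper's proof essentially step for step. You form the midpoint $w=\tfrac12(u_0+u_1)$, use Proposition~\ref{prop:unique-density-lambda} to get $Dw=Du_0=Du_1$ on the positive-density region so that $w\in\E(m,\lambda)$, use linearity of $J$ to get $w\in\M_J$, and then combine Lemma~\ref{lem:bump-super} (in particular the supersolution property of $w$ obtained from minimality) with strict convexity of $H(x,\cdot)$ at common differentiability points in $\Ovac$. Your observation that $\Res\subset\{m=0\}$, so that $\{m>0\}\subset\Pos$ up to a null set, is exactly the paper's remark that $m=0$ a.e.\ on $\Td\setminus\Pos$.
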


\begin{proof}
Let \(u_0,u_1\in\M_J\), and set
\[
        w:=\frac{u_0+u_1}{2}.
\]
We first prove that \(w\in\M_J\).

Since \(u_0,u_1\in W^{1,s}(\Td)\), we have \(w\in W^{1,s}(\Td)\).  
By Proposition \ref{prop:unique-density-lambda}, applied to the two weak
solutions \((u_0,m,\lambda)\) and \((u_1,m,\lambda)\), we have
\[
        Du_0=Du_1
        \quad\text{a.e. on }\Pos.
\]
In particular,
\[
        Dw=Du_0=Du_1
        \quad\text{a.e. on }\Pos.
\]
On the other hand, by the definition of \(\Pos\), we have
\[
        m=0
        \quad\text{a.e. on }\Td\setminus\Pos,
\]
which yields
\begin{equation}\label{admissible-conti-w}
        mD_pH(x,Dw)=mD_pH(x,Du_0)
        \quad\text{a.e. on }\Td.
\end{equation}
Since \(u_0\in\E(m,\lambda)\), this gives
\[
        mD_pH(x,Dw)\in L^{s'}(\Td;\R^d),
\]
and the continuity equation for \(w\) follows from the continuity equation
for \(u_0\) and \eqref{admissible-conti-w}. Thus $w$ satisfies \eqref{integrability-flux} and \eqref{eq:FPweak-ep} in Definition \ref{def:E}.

Next, by the convexity of \(H(x,\cdot)\),
\[
        H(x,Dw)
        \le
        \frac12 H(x,Du_0)+\frac12 H(x,Du_1)
        \le
        f(x,m)+\lambda
        \quad\text{a.e. on }\Td.
\]
Thus we obtain \eqref{eq:HJ-ineq} for \(w\).  Moreover, on the
positive-density region, the equality is preserved.  Indeed,
 on \(\Pos\) we have
\(Dw=Du_0=Du_1\).  Hence
\[
        H(x,Dw)=H(x,Du_0)=f(x,m)+\lambda
        \quad\text{a.e. on }\{m>0\},
\]
which yields \eqref{eq:HJ-eq-P}. Therefore \(w\in\E(m,\lambda)\).

Finally, by the linearity of \(J\), one has
\[
        J(w)
        =
        \frac12J(u_0)+\frac12J(u_1)
        =
        \min_{\E(m,\lambda)}J.
\]
Thus \(w\in\M_J\).

By Lemma \ref{lem:bump-super}, the functions \(u_0,u_1\), and \(w\) are
locally Lipschitz viscosity solutions of
\[
        H(x,Du)=\lambda+f(x,0)
        \quad\text{in }\Ovac.
\]
Therefore, at a.e. differentiability point in \(\Ovac\),
\[
        H(x,Du_0)=H(x,Du_1)=H(x,Dw)=\lambda+f(x,0).
\]
Since \(Dw=(Du_0+Du_1)/2\), strict convexity of \(H(x,\cdot)\) implies \eqref{eq:proof-O}.
\end{proof}

\subsection{Proof of the main result} 

\begin{proof}[Proof of Theorem \ref{thm:main-unique}]
Let \(u_0,u_1\in\M_J\).  By Propositions \ref{prop:unique-density-lambda} and \ref{prop:ae-vac}, we have
\[
        Du_0=Du_1
        \quad\text{a.e. on }\Pos\cup\Ovac.
\]
By Assumption \ref{ass:residual}, the same identity holds a.e. on
\(\ResGap\).  On \(\ResZero\), the inequality
\(H(x,Du_i)\le \ell(x)=\underline H(x)\) forces both gradients to be the
unique minimizer of \(p\mapsto H(x,p)\).  Hence
\(Du_0=Du_1\) a.e. on \(\ResZero\).

  Since
\[
        \Td=\Pos\dot\cup\Ovac\dot\cup\ResZero\dot\cup\ResGap
\]
up to null sets, we obtain
\[
        Du_0=Du_1
        \quad\text{a.e. on }\Td.
\]
Since \(\Td\) is connected, \(u_1-u_0\) is constant a.e. on $\Td$.  Thus
\(\M_J\) contains at most one element up to additive constants.

Under Assumption \ref{ass:discounted-compactness}, Proposition
\ref{prop:selection} implies that every subsequential limit belongs to
\(\M_J\).  Since \(\M_J\) has a unique normalized element, the whole
normalized family converges.
\end{proof}

\section{Concrete convergence results via the gap residual condition}
\label{sec:examples}

In this section we turn the uniqueness criterion into concrete convergence
results.  The residual decomposition separates the zero-gap part
\(\ResZero\), where the Hamilton--Jacobi sublevel set is a singleton, from
the gap part \(\ResGap\), where a residual non-uniqueness of gradients could
in principle remain.  In the classes considered below, we prove that
\(\ResGap\) is null.  Thus Assumption \ref{ass:residual} imposes no
additional restriction, and, under Assumption
\ref{ass:discounted-compactness}, Theorem \ref{thm:main-unique} yields
convergence of the whole normalized discounted family.

\subsection{A threshold criterion}
\label{sec:threshold-criterion}

We first record a simple criterion which will be used below.

\begin{lemma}
\label{lem:threshold-no-gap}
Assume that
\begin{equation}\label{eq:threshold-characterization}
        \{m>0\}=\{\underline H>\ell\}
        \quad\text{up to null sets}.
\end{equation}
Then
\[
        |\ResGap|=0.
\]
\end{lemma}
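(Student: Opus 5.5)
The plan is a direct set-theoretic argument from the definitions, up to null sets. Recall that
\[
        \ResGap=\Res\cap\{\ell>\underline H\},
        \qquad
        \Res=\Td\setminus(\Pos\cup\Ovac),
        \qquad
        \Pos=\{m>0\}\setminus\Ovac .
\]
So a point of \(\Res\) lies outside \(\Ovac\) and is not in \(\{m>0\}\setminus\Ovac\). In particular, \(\Res\subset\{m=0\}\) pointwise for the fixed representative, and hence a.e.

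The key step is to use \eqref{eq:threshold-characterization}. Taking complements gives
\[
        \{m=0\}=\{\underline H\le\ell\}
\]
up to null sets. Since \(\{\ell>\underline H\}\subset\{\underline H\le \ell\}\), this is compatible with \(\ResGap\subset\{m=0\}\) and does not yet yield nullness. The real content must therefore come from the openness of \(\{\ell>\underline H\}\). Both \(\ell(x)=\lambda+f(x,0)\) and \(\underline H\) are continuous, by (F1) and the continuity of \(\underline H\) noted in the paper. Hence
\[
        U:=\{\ell>\underline H\}
\]
is open.

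By \eqref{eq:threshold-characterization}, \(U\cap\{m>0\}\subset\{\underline H>\ell\}\cap\{\ell>\underline H\}=\emptyset\) up to a null set. So \(m=0\) a.e. in the open set \(U\). By Definition \ref{def:essential-zero-density}, this gives \(U\subset\Ovac\). Since \(\Res\cap\Ovac=\emptyset\), we conclude
\[
        \ResGap=\Res\cap U\subset \Res\cap\Ovac=\emptyset
\]
up to the null sets coming from the choice of representative, and therefore \(|\ResGap|=0\).

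The only point requiring care is the bookkeeping of null sets. The identity \eqref{eq:threshold-characterization} holds only a.e., while the definition of \(\Ovac\) demands \(m=0\) a.e. in an open set, which is exactly what we have; so no pointwise information is needed. I expect no real obstacle beyond checking continuity of \(\ell\) and \(\underline H\). The latter follows from uniform coercivity: minimizers of \(H(x,\cdot)\) stay in a fixed compact set, and a minimum of a continuous function over a compact parameter-independent set is continuous in \(x\).
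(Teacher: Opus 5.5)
Your proof is correct and matches the paper's argument: the set $\{\ell>\underline H\}$ is open by continuity, the threshold characterization forces $m=0$ a.e. on it, so it lies in $\Ovac$ and is disjoint from $\Res$. The only difference is that you also spell out why $m=0$ a.e. on this open set and why $\underline H$ is continuous, which the paper takes as given.
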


\begin{proof}
Set
\[
        O:=\{x\in\Td:\ \underline H(x)<\ell(x)\}.
\]
Since \(\underline H\) and \(\ell\) are continuous, \(O\) is open.  By
\eqref{eq:threshold-characterization}, we have \(m=0\) a.e. in \(O\), which yields 
\(O\subset\Ovac\).
Hence we have
\[
        \Res\cap O=\emptyset.
\]
Since
\[
        \ResGap=\Res\cap\{\ell>\underline H\}=\Res\cap O,
\]
we conclude that \(|\ResGap|=0\).
\end{proof}
If \(|\ResGap|=0\), then Assumption \ref{ass:residual} imposes no additional
restriction.
Together with Theorem \ref{thm:main-unique}, this gives the following
convergence criterion.

\begin{corollary}
\label{cor:no-gap-residual-full}
Suppose that Assumptions \ref{ass:H}, \ref{ass:f}, and
\ref{ass:discounted-compactness} hold.  If
\[
        |\ResGap|=0,
\]
then \(\M_J\) contains at most one element up to additive constants.
Consequently, the normalized discounted value functions converge along the
whole family:
\[
        \langle u^\varepsilon\rangle\rightharpoonup u_*
        \quad\text{weakly in }W^{1,s}(\Td),
\]
where \(u_*\) is the unique selected minimizer normalized by
\[
        \int_{\Td}u_*\dx=0.
\]
\end{corollary}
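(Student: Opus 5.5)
The corollary reduces directly to Theorem~\ref{thm:main-unique}, so the plan is to check that its hypotheses hold. Assumptions~\ref{ass:H}, \ref{ass:f}, and \ref{ass:discounted-compactness} are assumed outright. The only remaining input is Assumption~\ref{ass:residual}, and I will show that it holds trivially when \(|\ResGap|=0\).

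First fix the limiting pair \((m,\lambda)\). By Proposition~\ref{prop:unique-density-lambda}, the density and ergodic constant of any weak solution to \eqref{eq:ergodic-intro} are unique. Hence every subsequential limit from Assumption~\ref{ass:discounted-compactness} produces the same \((m,\lambda)\). The sets \(\Ovac,\Pos,\Res,\ResZero,\ResGap\) and the class \(\M_J\) are therefore unambiguous, with \(\Pos\) and \(\Res\) understood up to null sets. This also means the hypothesis \(|\ResGap|=0\) refers to a single well-defined set.

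Next verify Assumption~\ref{ass:residual}. Take any \(u_0,u_1\in\M_J\); both lie in \(W^{1,s}(\Td)\), so their gradients are defined a.e. Because \(\ResGap\) is Lebesgue null, the statement ``\(Du_0=Du_1\) a.e. on \(\ResGap\)'' holds vacuously, since any property holds almost everywhere on a null set. Theorem~\ref{thm:main-unique} then gives that \(\M_J\) has at most one element up to additive constants.

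Finally, obtain convergence from the second part of Theorem~\ref{thm:main-unique}. Given any sequence \(\varepsilon_n\to0\), Assumption~\ref{ass:discounted-compactness} extracts a subsequence along which \(\langle u^{\varepsilon_n}\rangle\rightharpoonup\bar u\). Proposition~\ref{prop:selection} places \(\bar u\) in \(\M_J\), so \(\M_J\) is nonempty. Since \(\int_{\Td}\langle u^{\varepsilon_n}\rangle\dx=0\) and this integral passes to weak limits, \(\bar u\) is normalized, hence \(\bar u=u_*\). Thus every sequence has a further subsequence converging weakly in \(W^{1,s}(\Td)\) to the same \(u_*\). By the standard subsequence-of-subsequence argument, the whole family converges to \(u_*\).

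There is no real obstacle. The only point needing care is that the null-set convention for \(\Res\), fixed after Definition~\ref{def:essential-zero-density}, makes \(|\ResGap|=0\) independent of the chosen representative of \(m\).
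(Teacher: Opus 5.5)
Your proposal is correct and follows the paper's route. Since \(\ResGap\) is null, Assumption \ref{ass:residual} holds vacuously, and Theorem \ref{thm:main-unique} then gives both conclusions; its proof already contains the subsequence argument via Proposition \ref{prop:selection}. Your observation that Proposition \ref{prop:unique-density-lambda} fixes a single limiting pair \((m,\lambda)\) across all subsequences is a helpful clarification that the paper leaves implicit.
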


\subsection{One-dimensional problems}
\label{sec:oned}

We first consider the one-dimensional case.  For background on
one-dimensional stationary mean field games with local coupling, where low
regularity, zero-density regions, and non-uniqueness phenomena can be seen
explicitly, we refer to \cite{GomesNurbekyanPrazeres2018}.

When \(d=1\), the ergodic system \eqref{eq:ergodic-intro} becomes
\begin{equation}\label{eq:1d-ergodic}
\begin{cases}
H(x,u_x)=f(x,m)+\lambda &\qquad\text{in }\Tone, \\[1mm]
-\bigl(mH_p(x,u_x)\bigr)_x=0 &\qquad\text{in }\Tone, \\[1mm]
m\ge0,\qquad \displaystyle\int_{\Tone}m\dx=1.
\end{cases}
\end{equation}

\begin{proposition}
\label{prop:1d-no-gap}
Let \(d=1\).  Suppose that Assumptions \ref{ass:H} and \ref{ass:f} hold.
Then
\[
        |\ResGap|=0.
\]
\end{proposition}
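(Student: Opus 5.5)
The plan is to use the fact that in one space dimension the continuity equation forces the flux to be constant. Then a case distinction on the value of that constant shows that either the residual set is null, or the threshold characterization \eqref{eq:threshold-characterization} holds, in which case Lemma \ref{lem:threshold-no-gap} applies. Throughout, fix a weak solution \((u,m,\lambda)\) with \(u\in\E(m,\lambda)\); the statement is only meaningful when \(\E(m,\lambda)\neq\emptyset\).

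\emph{Step 1: constant flux.} Set \(j:=mH_p(x,u_x)\), which lies in \(L^{s'}(\Tone)\subset L^1(\Tone)\) by \eqref{integrability-flux}. By \eqref{eq:FPweak-ep}, \(j_x=0\) in \(\mathcal D'(\Tone)\). By the du Bois-Reymond lemma (a distribution on a connected one-dimensional domain with zero derivative is constant), there is a constant \(c\in\R\) with \(j=c\) a.e. on \(\Tone\). Making this step rigorous is the only place where one-dimensionality is used. I expect it to be the main, though routine, technical point: one must check that testing against \(C^\infty(\Tone)\) functions suffices and that no periodicity issue arises. It does not, since a constant is periodic.

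\emph{Step 2: the case \(c\neq0\).} Then \(m\neq0\) a.e., so \(m>0\) a.e. Consequently no open set carries \(m=0\) a.e., so \(\Ovac=\emptyset\). Hence \(\Pos=\{m>0\}\) has full measure, and \(\Res\), and in particular \(\ResGap\), is null.

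\emph{Step 3: the case \(c=0\).} Then \(H_p(x,u_x)=0\) a.e. on \(\{m>0\}\). By strict convexity, \(u_x\) is then the minimizer of \(H(x,\cdot)\), so \(H(x,u_x)=\underline H(x)\) there. Combining this with \eqref{eq:HJ-eq-P} and the strict monotonicity (F2) gives, a.e. on \(\{m>0\}\),
\[
\underline H(x)=f(x,m)+\lambda>f(x,0)+\lambda=\ell(x).
\]
Conversely, a.e. on \(\{m=0\}\), \eqref{eq:HJ-ineq} gives \(\underline H(x)\le H(x,u_x)\le \ell(x)\). Together these show \(\{m>0\}=\{\underline H>\ell\}\) up to null sets, which is exactly \eqref{eq:threshold-characterization}. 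Lemma \ref{lem:threshold-no-gap} then yields \(|\ResGap|=0\).

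Since the two cases exhaust all possibilities, the proposition follows. Combined with Corollary \ref{cor:no-gap-residual-full}, this gives full-family convergence in one dimension.
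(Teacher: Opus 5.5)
Your proposal is correct and follows the paper's proof essentially step for step. Both use constancy of the flux $mH_p(x,u_x)$ from the one-dimensional continuity equation, dispose of the nonzero-flux case via $m>0$ a.e., and in the zero-flux case derive the threshold characterization $\{m>0\}=\{\underline H>\ell\}$ up to null sets before invoking Lemma~\ref{lem:threshold-no-gap}; your explicit remark that $\Ovac=\emptyset$ when $m>0$ a.e. just spells out the paper's observation that the zero-density region is null.
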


\begin{proof}
Fix \(u\in\E(m,\lambda)\).  From the continuity equation in
\eqref{eq:1d-ergodic}, there exists a constant \(j\in\R\) such that
\begin{equation}\label{eq:1d-flux}
        m(x)H_p(x,u_x(x))=j
        \quad\text{for a.e. }x\in\Tone.
\end{equation}
If \(j\ne0\), then \(m>0\) a.e. on \(\Tone\).  Hence the zero-density region
is null, and in particular
\[
        |\ResGap|=0.
\]

We may therefore assume \(j=0\).  Define
\begin{equation}\label{eq:p0-h0-1d}
        p_0(x):=\argmin_{p\in\R}H(x,p),
        \qquad
        h_0(x):=H(x,p_0(x)).
\end{equation}
By the strict convexity and coercivity of \(H(x,\cdot)\), \(p_0(x)\) is
uniquely defined.  Moreover,
\[
        h_0(x)=\underline H(x)
        \quad\text{for all }x\in\Tone.
\]

We prove that
\begin{equation}\label{eq:1d-threshold}
        \{m>0\}=\{h_0>\ell\}
        \quad\text{up to null sets}.
\end{equation}
On \(\{m>0\}\), \eqref{eq:1d-flux} with $j=0$ gives
\[
        H_p(x,u_x)=0
        \quad\text{a.e. on }\{m>0\}.
\]
By strict convexity,
\[
        u_x=p_0(x)
        \quad\text{a.e. on }\{m>0\}.
\]
Using the Hamilton--Jacobi equality on \(\{m>0\}\), we obtain
\[
        h_0(x)
        =
        H(x,p_0(x))
        =
        f(x,m(x))+\lambda
        >
        f(x,0)+\lambda
        =
        \ell(x)
        \quad\text{a.e. on }\{m>0\}.
\]
Thus
\[
        \{m>0\}\subset\{h_0>\ell\}
        \quad\text{up to null sets}.
\]

Conversely, on \(\{m=0\}\), the Hamilton--Jacobi inequality gives
\[
        H(x,u_x)\le \lambda+f(x,0)=\ell(x)
        \quad\text{a.e. on }\{m=0\}.
\]
Since
\[
        h_0(x)=\min_{p\in\R}H(x,p)\le H(x,u_x),
\]
we get
\[
        h_0(x)\le \ell(x)
        \quad\text{a.e. on }\{m=0\}.
\]
Hence
\[
        \{h_0>\ell\}\subset\{m>0\}
        \quad\text{up to null sets}.
\]
This proves \eqref{eq:1d-threshold}.  Since \(h_0=\underline H\), we have
\[
        \{m>0\}=\{\underline H>\ell\}
        \quad\text{up to null sets}.
\]
The assertion follows from Lemma \ref{lem:threshold-no-gap}.
\end{proof}

The preceding proposition gives the first concrete convergence result.

\begin{corollary}[Convergence in one dimension]
\label{cor:1d-convergence}
Let \(d=1\).  Suppose that Assumptions \ref{ass:H}, \ref{ass:f}, and
\ref{ass:discounted-compactness} hold.  Then the normalized discounted value
functions converge along the whole family:
\[
        \langle u^\varepsilon\rangle\rightharpoonup u_*
        \quad\text{weakly in }W^{1,s}(\Tone),
\]
where \(u_*\) is the unique selected minimizer normalized by
\[
        \int_{\Tone}u_*\dx=0.
\]
\end{corollary}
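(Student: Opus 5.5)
The plan is to obtain Corollary \ref{cor:1d-convergence} as a direct instance of Corollary \ref{cor:no-gap-residual-full}, with Proposition \ref{prop:1d-no-gap} supplying its only extra hypothesis. The reasoning is short, but one point of bookkeeping has to be handled: the sets \(\ResGap\), \(\Ovac\), \(\Res\) are defined relative to a fixed pair \((m,\lambda)\), whereas the corollary concerns the discounted family. So the first step is to identify that pair.

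Fix any sequence \(\varepsilon_n\to0\). Assumption \ref{ass:discounted-compactness} gives a subsequence and a weak solution \((\bar u,m,\lambda)\) of \eqref{eq:1d-ergodic}. By Proposition \ref{prop:unique-density-lambda}, the pair \((m,\lambda)\) does not depend on the sequence or on the subsequence, because any two weak solutions of the ergodic problem share their density and their ergodic constant. We therefore fix this common \((m,\lambda)\), and with it \(\ell\), \(\Ovac\), \(\Pos\), \(\Res\) and \(\ResGap\). In particular \(\E(m,\lambda)\) is nonempty, since it contains \(\bar u\). This nonemptiness is exactly what the proof of Proposition \ref{prop:1d-no-gap} uses when it fixes an element \(u\in\E(m,\lambda)\).

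With \(d=1\) and Assumptions \ref{ass:H} and \ref{ass:f} in force, Proposition \ref{prop:1d-no-gap} gives \(|\ResGap|=0\). Then Assumption \ref{ass:residual} holds vacuously, and Corollary \ref{cor:no-gap-residual-full}, which is Theorem \ref{thm:main-unique} applied in this setting, yields that \(\M_J\) has at most one element up to constants.

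The remaining step is to show that the whole family converges. By Proposition \ref{prop:selection} in the appendix, every subsequential weak \(W^{1,s}\) limit \(\bar u\) of \(\langle u^{\varepsilon_n}\rangle\) lies in \(\M_J\). Each such limit also has zero mean, because \(\langle u^{\varepsilon_n}\rangle\) has zero mean and weak convergence in \(W^{1,s}\) preserves integrals. Hence every subsequential limit equals the unique normalized minimizer \(u_*\). Since every sequence \(\varepsilon_n\to0\) has a subsequence converging to this same \(u_*\), the standard subsequence principle gives \(\langle u^\varepsilon\rangle\rightharpoonup u_*\) along the full family.

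The only delicate point is the one addressed at the start: the residual sets must refer to one fixed pair \((m,\lambda)\), and the density uniqueness in Proposition \ref{prop:unique-density-lambda} is what guarantees this. Beyond that, the result follows directly from the earlier statements.
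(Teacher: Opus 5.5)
Your proposal is correct and follows the paper's proof: Proposition~\ref{prop:1d-no-gap} gives \(|\ResGap|=0\), and Corollary~\ref{cor:no-gap-residual-full} then yields uniqueness of the normalized selected minimizer and full-family convergence. Your extra bookkeeping makes explicit what the paper leaves implicit: you use Proposition~\ref{prop:unique-density-lambda} to fix a single pair \((m,\lambda)\), note that \(\E(m,\lambda)\neq\emptyset\), and spell out the zero-mean and subsequence argument.
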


\begin{proof}
By Proposition \ref{prop:1d-no-gap}, we have \(|\ResGap|=0\).  The conclusion
follows from Corollary \ref{cor:no-gap-residual-full}.
\end{proof}

\subsection{Hamiltonians minimized at \texorpdfstring{\(p=0\)}{p=0}}
\label{sec:zero-minimizer}
We next give a multidimensional mechanism ensuring that the gap residual set
is null.  We assume that the pointwise minimizer of the Hamiltonian in the
second variable is independent of \(x\) and equal to the zero vector:
\begin{equation}\label{eq:zero-minimizer-condition}
        H(x,0)=\min_{p\in\R^d}H(x,p)
        \qquad\text{for every }x\in\Td .
\end{equation}
Under Assumption \ref{ass:H}, this minimizer is unique.

\begin{remark}
The condition \eqref{eq:zero-minimizer-condition} holds, for example, for
Hamiltonians of the form \(H(x,p)=h(p)+V(x)\) with
\(h(0)=\min_{p\in\R^d}h(p)=0\), and for quadratic Hamiltonians
\(H(x,p)=\langle A(x)p,p\rangle+V(x)\) with \(A(x)\) symmetric and uniformly
positive definite.
\end{remark}

\begin{proposition}
\label{prop:zero-minimizer-no-gap}
Suppose that Assumptions \ref{ass:H} and \ref{ass:f} hold, and assume
\eqref{eq:zero-minimizer-condition}.  Then
\[
        |\ResGap|=0.
\]
\end{proposition}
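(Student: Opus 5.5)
The plan is to reduce to Lemma \ref{lem:threshold-no-gap}: I will show that, under \eqref{eq:zero-minimizer-condition}, every \(u\in\E(m,\lambda)\) satisfies \(Du=0\) a.e. on \(\{m>0\}\). The threshold characterization \eqref{eq:threshold-characterization} then follows exactly as in the one-dimensional case. Throughout I fix some \(u\in\E(m,\lambda)\). If \(\E(m,\lambda)=\emptyset\), the residual decomposition is not in use and there is nothing to prove. Note that here \(\underline H(x)=H(x,0)\).

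\textbf{Step 1.} Test the continuity equation \eqref{eq:FPweak-ep} against \(u\) itself. This is legitimate because \(u\in W^{1,s}(\Td)\) and the flux \(mD_pH(x,Du)\) lies in \(L^{s'}\) by \eqref{integrability-flux}; one approximates \(u\) by smooth functions in \(W^{1,s}\). The result is
\[
        \int_{\Td} m\,D_pH(x,Du)\cdot Du\dx=0 .
\]

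\textbf{Step 2.} Use convexity together with \eqref{eq:zero-minimizer-condition}. For every \(x\) and \(p\),
\[
        D_pH(x,p)\cdot p\ \ge\ H(x,p)-H(x,0)\ \ge\ 0.
\]
Strict convexity of \(H(x,\cdot)\) makes the first inequality strict whenever \(p\neq0\). Hence the integrand \(m\,D_pH(x,Du)\cdot Du\) is nonnegative a.e. Since its integral vanishes, it is zero a.e., and therefore \(Du=0\) a.e. on \(\{m>0\}\). This is the multidimensional replacement for the constant-flux argument of Proposition \ref{prop:1d-no-gap}. It is the only step needing care, chiefly the justification of the test function, which I expect to go through directly from \eqref{integrability-flux}.

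\textbf{Step 3.} Derive \eqref{eq:threshold-characterization}. On \(\{m>0\}\), the equality \eqref{eq:HJ-eq-P} with \(Du=0\) and strict monotonicity of \(f\) give
\[
        \underline H(x)=H(x,0)=f(x,m)+\lambda>f(x,0)+\lambda=\ell(x)
        \quad\text{a.e.}
\]
On \(\{m=0\}\), the inequality \eqref{eq:HJ-ineq} gives
\[
        \underline H(x)\le H(x,Du)\le \ell(x)
        \quad\text{a.e.}
\]
Together these show \(\{m>0\}=\{\underline H>\ell\}\) up to null sets. Lemma \ref{lem:threshold-no-gap} then yields \(|\ResGap|=0\).
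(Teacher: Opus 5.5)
Your proposal is correct and follows the paper's proof step for step: test the continuity equation with $u$, use strict convexity and the fact that $0$ minimizes $H(x,\cdot)$ to get $Du=0$ a.e. on $\{m>0\}$, derive the threshold characterization, and conclude with Lemma \ref{lem:threshold-no-gap}. The differences are cosmetic: you use the strict tangent-plane inequality $D_pH(x,p)\cdot p>H(x,p)-H(x,0)\ge0$ for $p\neq0$, whereas the paper uses monotonicity of $D_pH(x,\cdot)$ together with $D_pH(x,0)=0$; and your aside about $\E(m,\lambda)=\emptyset$ is better stated as the standing assumption, tacit in the paper too, that $(m,\lambda)$ comes from a weak solution, since the conclusion is not automatic otherwise.
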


\begin{proof}
Let \(u\in\E(m,\lambda)\).  We prove the threshold characterization
\[
        \{m>0\}=\{\underline H>\ell\}
        \quad\text{up to null sets}.
\]
The conclusion then follows from Lemma \ref{lem:threshold-no-gap}.

Testing the continuity equation by \(u\), we obtain
\[
        \int_{\Td}mD_pH(x,Du)\cdot Du\dx=0.
\]
Since \(p=0\) is the minimizer of \(p\mapsto H(x,p)\), we have
\[
        D_pH(x,0)=0 \qquad\text{for every }x\in\Td .
\]
By the convexity of \(H(x,\cdot)\),
\[
        \bigl(D_pH(x,Du)-D_pH(x,0)\bigr)\cdot Du\ge0
        \quad\text{a.e. on }\Td .
\]
Hence
\[
        D_pH(x,Du)\cdot Du\ge0
        \quad\text{a.e. on }\Td .
\]
Since the integral of this nonnegative quantity with weight \(m\) is zero,
it vanishes \(m\,dx\)-a.e.  Equivalently, it vanishes Lebesgue-a.e. on
\(\{m>0\}\).  By the strict convexity of \(H(x,\cdot)\), we obtain
\[
        Du=0
        \quad\text{a.e. on }\{m>0\}.
\]

On \(\{m>0\}\), the Hamilton--Jacobi equality gives
\[
        \underline H(x)
        =
        H(x,0)
        =
        H(x,Du)
        =
        f(x,m(x))+\lambda
        >
        f(x,0)+\lambda =\ell(x).
\]
Thus
\[
        \{m>0\}\subset\{\underline H>\ell\}
        \quad\text{up to null sets}.
\]

Conversely, on \(\{m=0\}\), the Hamilton--Jacobi inequality gives
\[
        H(x,Du)\le \lambda+f(x,0)=\ell(x)
        \quad\text{a.e. on }\{m=0\}.
\]
Since \(\underline H(x)\le H(x,Du)\), we get
\[
        \underline H(x)\le \ell(x)
        \quad\text{a.e. on }\{m=0\}.
\]
Therefore
\[
        \{\underline H>\ell\}\subset\{m>0\}
        \quad\text{up to null sets}.
\]
We conclude that
\[
        \{m>0\}=\{\underline H>\ell\}
        \quad\text{up to null sets}.
\]
By Lemma \ref{lem:threshold-no-gap}, this implies that \(\ResGap\) is null.
\end{proof}

The second concrete convergence result follows.

\begin{corollary}[Convergence for Hamiltonians minimized at \(p=0\)]
\label{cor:zero-minimizer-convergence}
Suppose that Assumptions \ref{ass:H}, \ref{ass:f}, and
\ref{ass:discounted-compactness} hold, and assume
\[
        H(x,0)=\min_{p\in\R^d}H(x,p)
        \qquad\text{for every }x\in\Td .
\]
Then the normalized discounted value functions converge along the whole
family:
\[
        \langle u^\varepsilon\rangle\rightharpoonup u_*
        \quad\text{weakly in }W^{1,s}(\Td),
\]
where \(u_*\) is the unique selected minimizer normalized by
\[
        \int_{\Td}u_*\dx=0.
\]
\end{corollary}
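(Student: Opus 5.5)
This corollary is a direct combination of Proposition \ref{prop:zero-minimizer-no-gap} with Corollary \ref{cor:no-gap-residual-full}, exactly parallel to the proof of Corollary \ref{cor:1d-convergence}.

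First, I fix the limiting pair \((m,\lambda)\). By Assumption \ref{ass:discounted-compactness}, every subsequential limit \((\bar u,m,\lambda)\) is a weak solution of \eqref{eq:ergodic-intro}. By Proposition \ref{prop:unique-density-lambda}, the density \(m\) and the constant \(\lambda\) are the same for all such limits. Hence the sets \(\Ovac\), \(\Pos\), \(\Res\), \(\ResZero\), \(\ResGap\) and the class \(\E(m,\lambda)\) are determined independently of the chosen subsequence. Moreover, \(\E(m,\lambda)\neq\emptyset\), since it contains \(\bar u\).

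Second, I invoke Proposition \ref{prop:zero-minimizer-no-gap}. Under Assumptions \ref{ass:H}, \ref{ass:f} and \eqref{eq:zero-minimizer-condition}, it gives \(|\ResGap|=0\). Its proof requires some \(u\in\E(m,\lambda)\), which we have from the first step. That element yields the threshold identity \(\{m>0\}=\{\underline H>\ell\}\) up to null sets, and then Lemma \ref{lem:threshold-no-gap} gives \(|\ResGap|=0\).

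Third, I apply Corollary \ref{cor:no-gap-residual-full}. It gives that \(\M_J\) has at most one element up to additive constants. Every subsequential limit lies in \(\M_J\) by Proposition \ref{prop:selection}, so there is a unique normalized selected minimizer \(u_*\). The standard subsequence argument then upgrades this to convergence of the whole family: every sequence \(\varepsilon_n\to0\) has a further subsequence along which \(\langle u^{\varepsilon_n}\rangle\rightharpoonup u_*\) weakly in \(W^{1,s}(\Td)\), and therefore the whole family converges. Note that the limit has zero mean, since \(\int\langle u^{\varepsilon}\rangle\dx=0\) and weak convergence in \(W^{1,s}\) preserves integrals.

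There is no real obstacle in this argument. The only point needing care is the first step: \(m\) and \(\lambda\) must be independent of the subsequence, so that \(\M_J\) is a single fixed set. Otherwise different subsequences could select minimizers of different functionals. Proposition \ref{prop:unique-density-lambda} settles this.
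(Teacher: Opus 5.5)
Your proposal is correct and matches the paper's proof: Proposition \ref{prop:zero-minimizer-no-gap} gives \(|\ResGap|=0\), and Corollary \ref{cor:no-gap-residual-full} then gives uniqueness in \(\M_J\) and convergence of the whole normalized family. Your extra remarks make explicit two points the paper leaves implicit: the independence of \((m,\lambda)\) from the subsequence, via Proposition \ref{prop:unique-density-lambda}, and the nonemptiness of \(\E(m,\lambda)\).
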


\begin{proof}
By Proposition \ref{prop:zero-minimizer-no-gap}, we have \(|\ResGap|=0\).
The conclusion follows from Corollary \ref{cor:no-gap-residual-full}.
\end{proof}

\begin{remark}
In the convergence results above, the gap residual condition is verified
through the stronger property \(|\ResGap|=0\).  The case where
\(\ResGap\) has positive measure is not addressed here.
\end{remark}

\appendix
\section{A selection condition for vanishing discount limits}\label{sec:selection}

In this appendix, we recall the selection condition for vanishing discount limits.  The calculation follows the argument in \cite{GomesMitakeTerai2020,MitakeTerai2023}, but we give it for completeness.

\begin{proposition}[Selection condition]\label{prop:selection}
Suppose that Assumptions \ref{ass:H}, \ref{ass:f} and
\ref{ass:discounted-compactness} hold.  Let
\((u^{\varepsilon_n},m^{\varepsilon_n})\) be discounted weak solutions and let
\(\bar u\) be a subsequential limit as in Assumption
\ref{ass:discounted-compactness}.  Then
\begin{equation*}
        J(\bar u)\le J(v)
        \quad\text{for all }v\in\E(m,\lambda).
\end{equation*}
In particular, \(\bar u\in\M_J\).
\end{proposition}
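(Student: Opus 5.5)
The plan is the standard Lasry--Lions duality argument: subtract the two problems, test each continuity equation against the other value function, and exploit convexity of \(H\) in \(p\). Write \(u_n:=u^{\varepsilon_n}\), \(m_n:=m^{\varepsilon_n}\). Fix \(v\in\E(m,\lambda)\), which lies in \(W^{1,s}(\Td)\) and is therefore an admissible test function for the discounted continuity equation
\[
\int_{\Td}\varepsilon_n m_n\varphi\dx+\int_{\Td}m_nD_pH(x,Du_n)\cdot D\varphi\dx=\varepsilon_n\int_{\Td}\varphi\dx .
\]
Test this with \(\varphi=v-u_n\). By convexity, \(D_pH(x,Du_n)\cdot(Dv-Du_n)\le H(x,Dv)-H(x,Du_n)\). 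On \(\{m_n>0\}\) the discounted equality gives \(H(x,Du_n)=f(x,m_n)-\varepsilon_n u_n\). Everywhere, \(H(x,Dv)\le f(x,m)+\lambda\). This yields
\[
\varepsilon_n\!\int(v-u_n)(1-m_n)\le\int m_n\bigl(f(x,m)+\lambda-f(x,m_n)+\varepsilon_nu_n\bigr).
\]

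Next, test the ergodic continuity equation for \(v\) with \(u_n-v\). Using \(H(x,Du_n)\le f(x,m_n)-\varepsilon_nu_n\) a.e. and the ergodic equality on \(\{m>0\}\), we get
\[
0\le\int m\bigl(f(x,m_n)-\varepsilon_nu_n-f(x,m)-\lambda\bigr).
\]
Adding the two inequalities, the monotonicity term \(-\int(f(x,m_n)-f(x,m))(m_n-m)\le0\) is discarded, leaving
\[
\varepsilon_n\!\int(v-u_n)(1-m_n)\le\varepsilon_n\!\int u_n(m_n-m).
\]

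Divide by \(\varepsilon_n\) and write \(u_n=\langle u_n\rangle+c_n\) with \(c_n=\int u_n\). The constant drops out of both sides, since \(\int(1-m_n)=0\) and \(\int(m_n-m)=0\). Expanding and cancelling the \(\langle u_n\rangle m_n\) terms gives
\[
\int v(1-m_n)\le\int\langle u_n\rangle(1-m)\dx .
\]
Now pass to the limit. The left side converges to \(\int v(1-m)\) by weak convergence of \(m_n\) in \(L^q\), since \(v\in W^{1,s}\hookrightarrow L^{q'}\) by \eqref{exponent-relation}. The right side converges to \(\int\bar u(1-m)\), which needs \(\langle u_n\rangle\rightharpoonup\bar u\) weakly in \(L^{q'}\) (from \eqref{exponent-relation}) paired against the fixed function \(1-m\in L^q\). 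The result is \(-J(v)\le-J(\bar u)\), which is the claim.

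The main obstacle is justifying the testing rigorously. One must check that \(mD_pH(x,Dv)\) and \(m_nD_pH(x,Du_n)\) lie in \(L^{s'}\) so the pairings against \(W^{1,s}\) functions make sense. One must also check that terms such as \(\int m_nu_n\), \(\int m_n f(x,m)\) and \(\int m f(x,m_n)\) are finite, using (F3), \(m,m_n\in L^q\) and \(u_n\in L^{q'}\). Finally, the sign bookkeeping on \(\{m_n=0\}\) and \(\{m=0\}\) has to be handled: there, only inequalities hold, but they are multiplied by zero weights.
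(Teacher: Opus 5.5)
Your proposal is correct and follows the paper's proof essentially step for step. Your two inequalities from testing with $v-u_n$ and $u_n-v$ are \eqref{eq:sel-ineq1} and \eqref{eq:sel-ineq2} rearranged; you then discard the monotonicity term and pass to the limit only in pairings with a single varying factor ($v$ against $m_n$, and $\langle u_n\rangle$ against the fixed $1-m$). The only point left implicit is that $\bar u\in\E(m,\lambda)$, which the final assertion $\bar u\in\M_J$ needs and which Assumption~\ref{ass:discounted-compactness} supplies.
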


\begin{proof}
Fix \(v\in\E(m,\lambda)\).  We omit the subscript \(n\) and write \(\varepsilon\), \(u^\varepsilon\), and \(m^\varepsilon\).  On \(\{m^\varepsilon>0\}\), the discounted Hamilton--Jacobi equation and the ergodic inequality for \(v\) give
\begin{equation*}
        f(x,m)+\lambda-f(x,m^\varepsilon)
        \ge H(x,Dv)-H(x,Du^\varepsilon)-\varepsilon u^\varepsilon.
\end{equation*}
By convexity of \(H(x,\cdot)\),
\begin{equation*}
        H(x,Dv)-H(x,Du^\varepsilon)
        \ge D_pH(x,Du^\varepsilon)\cdot D(v-u^\varepsilon).
\end{equation*}
Multiplying by \(m^\varepsilon\) and integrating, and then using the discounted continuity equation \eqref{eq:discounted} tested by \(v-u^\varepsilon\), we obtain
\begin{equation}\label{eq:sel-ineq1}
\begin{aligned}
        \varepsilon\int_{\Td}(v-u^\varepsilon)\dx
        -\varepsilon\int_{\Td}v m^\varepsilon\dx
        \le
        \int_{\Td}(f(x,m)-f(x,m^\varepsilon))m^\varepsilon\dx+\lambda.
\end{aligned}
\end{equation}
Indeed,
\[
\int_{\Td}m^\varepsilon D_pH(x,Du^\varepsilon)\cdot D(v-u^\varepsilon)\dx
=
\varepsilon\int_{\Td}(1-m^\varepsilon)(v-u^\varepsilon)\dx.
\]
Next, on \(\{m>0\}\), the ergodic Hamilton--Jacobi equation for \(v\) and the discounted inequality for \(u^\varepsilon\) yield
\begin{equation*}
        \varepsilon u^\varepsilon+H(x,Du^\varepsilon)-H(x,Dv)
        \le f(x,m^\varepsilon)-f(x,m)-\lambda.
\end{equation*}
Using convexity at \(Dv\),
\[
        H(x,Du^\varepsilon)-H(x,Dv)
        \ge D_pH(x,Dv)\cdot D(u^\varepsilon-v),
\]
multiplying by \(m\), and testing the ergodic continuity equation for \(v\) by \(u^\varepsilon-v\), we get
\begin{equation}\label{eq:sel-ineq2}
        \varepsilon\int_{\Td}u^\varepsilon m\dx
        \le
        \int_{\Td}(f(x,m^\varepsilon)-f(x,m))m\dx-\lambda.
\end{equation}
Adding \eqref{eq:sel-ineq1} and \eqref{eq:sel-ineq2}, the constants \(\lambda\) cancel and we find
\begin{equation*}
\begin{aligned}
        \varepsilon\int_{\Td}u^\varepsilon m\dx
        +\varepsilon\int_{\Td}(v-u^\varepsilon)\dx
        -\varepsilon\int_{\Td}v m^\varepsilon\dx
        \le
        \int_{\Td}(f(x,m^\varepsilon)-f(x,m))(m-m^\varepsilon)\dx.
\end{aligned}
\end{equation*}
By the monotonicity of \(f\) in \(m\), the right-hand side is nonpositive.  Dividing by \(\varepsilon>0\), we obtain
\begin{equation*}
        \int_{\Td}u^\varepsilon m\dx-\int_{\Td}u^\varepsilon\dx
        \le
        \int_{\Td}v m^\varepsilon\dx-\int_{\Td}v\dx.
\end{equation*}
The left-hand side is \(\int \langle u^\varepsilon\rangle m\dx\).  Passing to the limit, we get
\[
        \int_{\Td}\bar u m\dx
        \le
        \int_{\Td}v m\dx-\int_{\Td}v\dx.
\]
Since \(\bar u\) is normalized by construction, \(\int\bar u\dx=0\), this is precisely
\[
        J(\bar u)\le J(v).
\]
As \(v\in\E(m,\lambda)\) was arbitrary, \(\bar u\in\M_J\).
\end{proof}

\end{document}